\PassOptionsToPackage{hidelinks}{hyperref}
\PassOptionsToPackage{nameinlink,noabbrev}{cleveref}
\documentclass{siamart251216}

\usepackage[T1]{fontenc}
\usepackage[utf8]{inputenc}
\usepackage{lmodern}
\usepackage{amsmath,amssymb,amsfonts,mathtools}
\usepackage{mathrsfs}
\usepackage{bm}
\usepackage{tikz}
\usepackage{enumitem}
\usepackage{booktabs}
\usepackage{array}
\usetikzlibrary{arrows.meta,positioning,calc,fit,decorations.pathmorphing,decorations.pathreplacing}
\allowdisplaybreaks
\numberwithin{theorem}{section}
\newsiamthm{conjecture}{Conjecture}
\newsiamremark{remark}{Remark}
\newsiamthm{example}{Example}
\AddToHook{env/conjecture/begin}{\crefalias{theorem}{conjecture}}
\AddToHook{env/example/begin}{\crefalias{theorem}{example}}
\AddToHook{env/remark/begin}{\crefalias{theorem}{remark}}

\crefname{conjecture}{conjecture}{conjectures}
\Crefname{conjecture}{Conjecture}{Conjectures}
\crefname{theorem}{theorem}{theorems}
\Crefname{theorem}{Theorem}{Theorems}
\crefname{lemma}{lemma}{lemmas}
\Crefname{lemma}{Lemma}{Lemmas}
\crefname{proposition}{proposition}{propositions}
\Crefname{proposition}{Proposition}{Propositions}
\crefname{corollary}{corollary}{corollaries}
\Crefname{corollary}{Corollary}{Corollaries}
\crefname{definition}{definition}{definitions}
\Crefname{definition}{Definition}{Definitions}
\crefname{remark}{remark}{remarks}
\Crefname{remark}{Remark}{Remarks}
\crefname{example}{example}{examples}
\Crefname{example}{Example}{Examples}
\crefname{figure}{Figure}{Figures}
\Crefname{figure}{Figure}{Figures}

\newcommand{\PP}{\mathcal P}
\newcommand{\QQ}{\mathcal Q}

\newcommand{\TT}{\mathcal T}

\newcommand{\ZZ}{\mathbb Z}

\newcommand{\FFq}{\mathbb F}
\newcommand{\wt}{\operatorname{wt}}
\newcommand{\supp}{\operatorname{supp}}

\newcommand{\cross}{\operatorname{cr}}
\newcommand{\sig}{\operatorname{sig}}

\newcommand{\diag}{\operatorname{diag}}

\newcommand{\defeq}{\mathrel{:=}}

\headers{Excess Obstructions}{Yutong Zhang and Yaoran Yang}

\title{Excess Obstructions and Layer-Contained Certificates for the Hypergraph Nash--Williams--Tutte Conjecture}

\author{Yutong Zhang\thanks{Corresponding author. School of Mathematics, Sichuan University, 24 First Loop Road South Section I, Chengdu, Sichuan 610064, China. Email: \email{yutongzhang@stu.scu.edu.cn}.}
\and Yaoran Yang\thanks{School of Mathematics, Sichuan University, 24 First Loop Road South Section I, Chengdu, Sichuan 610064, China. Email: \email{yangyaoran@stu.scu.edu.cn}.}}

\begin{document}
\maketitle

\begin{abstract}
Guo, Li, Shangguan, Tamo, and Wootters proposed a hypergraph analogue of the Nash--Williams--Tutte theorem, asserting that every $k$-weakly-partition-connected hypergraph admits a $k$-distinguishable tree assignment. We identify a sharp edge-count obstruction to the literal statement. A full tree assignment has $\rho(H)=\sum_{e\in E(H)}(|e|-1)$ labelled graph edges, whereas an ordered decomposition into $k$ spanning trees has exactly $k(t-1)$ edges. Since weak partition connectivity implies only $\rho(H)\ge k(t-1)$, every strict inequality rules out a full decomposition. In particular, for all $t\ge2$, $k\ge1$, and $q\ge1$, the hypergraph consisting of $k+q$ labelled copies of the full hyperedge is $k$-weakly-partition-connected but admits no $k$-distinguishable full tree assignment. We therefore isolate the critical regime $\rho(H)=k(t-1)$ and prove that every tree assignment of a critical $k$-weakly-partition-connected hypergraph admits a full ordered decomposition into $k$ spanning trees. Consequently, the critical conjecture reduces to the existence of some tree assignment having a decomposition whose signature fiber is a singleton. We establish this property for layer-contained certificates, without a star hypothesis or a rank restriction in interior layers. These certificates also imply weak partition connectivity by a quotient-rank argument and are stable under one-vertex sums. Finally, we derive a decomposition-indexed generalized-Laplace expansion for the relevant intersection-matrix minor, together with an exact formula for its collected monomial coefficients; we also give the required row-and-column perfect-shuffle normalization, identify criticality with the square full-assignment row count, and separate the critical conjecture from the additional overfull pruning problem.
\end{abstract}

\begin{center}
\begin{minipage}{0.94\textwidth}
\textbf{Keywords.} hypergraph tree packing; weak partition connectivity; Nash--Williams--Tutte theorem; Reed--Solomon codes; intersection matrices; labelled spanning trees.\\[2mm]
\textbf{MSC codes.} 05C65, 05C70, 05C05, 11T71, 94B05, 68Q25.
\end{minipage}
\end{center}

\section{Introduction, notation, and main results}\label{sec:intro}

The starting point of this note is the conjecture introduced by Guo, Li, Shangguan, Tamo, and Wootters in their work on Reed--Solomon list decoding~\cite{GLSTW24}. The conjecture was proposed as a hypergraph counterpart of the Nash--Williams--Tutte tree-packing theorem~\cite{NW61,Tutte61}; it is closely related to hypergraph decomposition results such as~\cite{FKK03} and was intended to provide a route to the intersection-matrix conjecture originating in~\cite{ST20}. We retain the labelled-multihypergraph formulation required for that application and keep the relevant edge-count parameter explicit.

Let $H=(V,E)$ be a finite labelled multihypergraph. Hyperedges are occurrences rather than merely subsets: occurrences with the same support remain distinct when their labels are distinct. We assume that every hyperedge occurrence is nonempty. A singleton occurrence has a unique assigned tree, namely the edgeless graph on its sole vertex. It contributes zero to $\rho(H)$ and to $w_H(\PP)$ for every partition $\PP$, and its signature coordinate is identically zero. Thus deleting all singleton occurrences preserves $k$-WPC, criticality, the existence of full tree decompositions, and distinguishability after the corresponding zero coordinates are omitted. Conversely, the assigned graph and every ordered decomposition for the reduced hypergraph extend canonically when the edgeless singleton label classes are reinstated; in a layer-contained certificate, the reinstated singleton occurrences may be assigned arbitrary layer indices. We henceforth assume that every hyperedge occurrence has cardinality at least two. Write $t=|V|$. The case $t=1$ is immediate under the convention that the edgeless graph on one vertex is a tree, so throughout the substantive arguments we assume $t\ge2$.

For a partition $\PP$ of $V$ and a hyperedge occurrence $e\in E$, let $p_{\PP}(e)$ denote the number of blocks of $\PP$ met by $e$. Define the weak partition excess by $w_H(\PP)=\sum_{e\in E}(p_{\PP}(e)-1)$. The hypergraph $H$ is \emph{$k$-weakly-partition-connected}, abbreviated $k$-WPC, if
\begin{equation}\label{eq:kwpc}
 w_H(\PP)\ge k(|\PP|-1)\qquad\text{for every partition $\PP$ of $V$}.
\end{equation}
For the discrete partition $\PP_0=\{\{v\}:v\in V\}$, this quantity is the total excess $\rho(H)\defeq\sum_{e\in E}(|e|-1)$. Consequently, $k$-WPC implies $\rho(H)\ge k(t-1)$.

A \emph{tree assignment} of $H$ chooses, for every hyperedge occurrence $e$, a simple tree $F_e$ on vertex set $e$ and labels every edge of $F_e$ by $e$. The resulting graph $G=\biguplus_{e\in E}F_e$ is a labelled multigraph: the union is disjoint at the level of labelled edge occurrences, and parallel edges may arise only from distinct label classes. Necessarily $|E(G)|=\rho(H)$. An ordered \emph{$k$-tree decomposition} of a labelled graph $G$ on $V$ is an ordered partition $E(G)=E(T_0)\dot\cup\cdots\dot\cup E(T_{k-1})$ in which every $T_i$ is a spanning tree of $V$. Throughout, a spanning tree of a multigraph is understood in the graphic-matroid sense; in particular, it contains no pair of parallel edge occurrences. Such a decomposition uses exactly $k(t-1)$ graph edges. We use the adjective \emph{full} when emphasizing that no edge occurrence of the assigned graph is discarded.

For a tree assignment $G$ and an ordered $k$-tree decomposition $\TT=(T_0,\ldots,T_{k-1})$, define its signature $\sig_{\TT}\in\ZZ_{\ge0}^{E(H)}$ by $\sig_{\TT}(e)\defeq\sum_{i=0}^{k-1}i\,|E(T_i)\cap E(F_e)|$ for every hyperedge occurrence $e$. The graph $G$ is \emph{$k$-distinguishable} if it has an ordered $k$-tree decomposition $\TT$ such that every ordered $k$-tree decomposition $\TT'$ with $\sig_{\TT'}=\sig_{\TT}$ satisfies $\TT'=\TT$.

\begin{conjecture}[Hypergraph Nash--Williams--Tutte conjecture, literal form]\label{conj:literal}
For positive integers $t$ and $k$, every $k$-WPC hypergraph on $t$ vertices has a $k$-distinguishable tree assignment.
\end{conjecture}

The first result identifies the exact numerical defect in the literal formulation.

\begin{theorem}[Sharp excess obstruction]\label{thm:excess-obstruction}
Let $H$ be a labelled multihypergraph on $t\ge2$ vertices. If $H$ has a $k$-distinguishable tree assignment, then $\rho(H)=k(t-1)$. Consequently, every $k$-WPC hypergraph satisfying $\rho(H)>k(t-1)$ is a counterexample to \Cref{conj:literal}.
\end{theorem}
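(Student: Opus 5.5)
The argument is a direct edge count, with no use of the weak-partition-connectivity condition in the first assertion. Suppose $G=\biguplus_{e\in E}F_e$ is a $k$-distinguishable tree assignment of $H$. By definition, $k$-distinguishability requires the existence of at least one ordered $k$-tree decomposition $\TT=(T_0,\ldots,T_{k-1})$ of $G$; the uniqueness clause on signature fibers plays no role. The plan is to count the edge occurrences of $G$ in two ways.

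\textbf{First count, via the assignment.} Each $F_e$ is a simple tree on the vertex set $e$, so $|E(F_e)|=|e|-1$. The union defining $G$ is disjoint at the level of labelled edge occurrences, hence $|E(G)|=\sum_{e\in E}(|e|-1)=\rho(H)$.

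\textbf{Second count, via the decomposition.} The ordered decomposition partitions $E(G)$ into $E(T_0)\dot\cup\cdots\dot\cup E(T_{k-1})$. Each $T_i$ is a spanning tree of the $t$-vertex set $V$ in the graphic-matroid sense, so $|E(T_i)|=t-1$. This uses $t\ge2$ only in that spanning trees are then well defined with $t-1$ edges; the parallel-edge convention does not affect the count. Hence $|E(G)|=k(t-1)$.

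Comparing the two counts gives $\rho(H)=k(t-1)$. For the consequence, let $H$ be $k$-WPC with $\rho(H)>k(t-1)$. If $H$ had a $k$-distinguishable tree assignment, the first part would force equality, a contradiction. So $H$ has no such assignment and violates \Cref{conj:literal} on $t$ vertices with parameter $k$.

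There is no genuine obstacle here. The only point to state carefully is that \emph{full} decomposition means every labelled edge occurrence of $G$ lies in exactly one $T_i$. This is exactly the partition requirement in the definition of an ordered $k$-tree decomposition, and it is what makes the two counts equal rather than giving only an inequality.

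Although the statement does not ask for it, one can remark that the hypothesis is nonvacuous: taking $k+q$ copies of the full hyperedge $V$ gives $p_{\PP}(e)-1=|\PP|-1$ for each copy. Then $w_H(\PP)=(k+q)(|\PP|-1)\ge k(|\PP|-1)$, while $\rho(H)=(k+q)(t-1)>k(t-1)$.
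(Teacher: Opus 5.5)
Your proposal is correct and follows essentially the same route as the paper. The paper also uses only the existence of one ordered $k$-tree decomposition guaranteed by distinguishability, and compares $|E(G)|=\rho(H)$ with $|E(G)|=k(t-1)$ via its counting lemma (\Cref{lem:counts}); the consequence for overfull $k$-WPC hypergraphs then follows immediately, as you say.
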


\begin{theorem}[Infinite overfull counterexample family]\label{thm:full-edge-counterexamples}
Fix $t\ge2$, $k\ge1$, and $q\ge1$. Let $H_{t,k,q}$ have vertex set $V$ and $k+q$ labelled hyperedge occurrences, each equal to $V$. Then $H_{t,k,q}$ is $k$-WPC and has no $k$-distinguishable tree assignment. The componentwise-minimal parameter triple is $(t,k,q)=(2,1,1)$; it yields the two-vertex hypergraph with two parallel labelled $2$-edges.
\end{theorem}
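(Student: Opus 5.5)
The plan is to verify the two properties separately: first $k$-WPC by a direct computation of $w_H(\PP)$, and then non-distinguishability by invoking \Cref{thm:excess-obstruction}.

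\textbf{$k$-WPC.} Fix a partition $\PP$ of $V$. Every occurrence of $H_{t,k,q}$ equals $V$, so it meets all $|\PP|$ blocks and $p_{\PP}(e)=|\PP|$. Summing over the $k+q$ occurrences gives
\begin{equation*}
w_{H_{t,k,q}}(\PP)=(k+q)(|\PP|-1)\ge k(|\PP|-1),
\end{equation*}
since $q\ge1$ and $|\PP|\ge1$. Hence \eqref{eq:kwpc} holds for every $\PP$.

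\textbf{No distinguishable assignment.} Each occurrence has cardinality $t$ and so contributes $t-1$ to the total excess. Therefore
\begin{equation*}
\rho(H_{t,k,q})=(k+q)(t-1)>k(t-1),
\end{equation*}
where strictness uses $q\ge1$ and $t\ge2$. By \Cref{thm:excess-obstruction}, a $k$-distinguishable tree assignment would force $\rho=k(t-1)$, so none exists. To keep the argument self-contained, one can also note the direct reason. Any tree assignment has $(k+q)(t-1)$ labelled edges, while an ordered $k$-tree decomposition must partition all of them into $k$ spanning trees with exactly $k(t-1)$ edges in total. So no decomposition exists, and distinguishability, which requires at least one decomposition, fails.

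\textbf{Minimality and the smallest case.} The constraints are $t\ge2$, $k\ge1$, $q\ge1$, so $(2,1,1)$ is the componentwise minimum of the admissible triples. For these values $H$ has two labelled occurrences of $V=\{u,v\}$, namely two parallel labelled $2$-edges. This case can be checked directly: the only assignment is the two parallel edges $uv$. A single spanning tree on two vertices has one edge and cannot use both occurrences, consistent with the general argument.

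The statement has no genuine obstacle. The only points needing care are that $t\ge2$ and $q\ge1$ are exactly what make the inequality $\rho>k(t-1)$ strict, and that "full" decompositions must use every edge occurrence.
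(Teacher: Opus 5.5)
Your proposal is correct and follows the paper's proof essentially line for line. You compute $w_{H_{t,k,q}}(\PP)=(k+q)(|\PP|-1)\ge k(|\PP|-1)$ to get $k$-WPC, then feed $\rho(H_{t,k,q})=(k+q)(t-1)>k(t-1)$ into \Cref{thm:excess-obstruction}, and read off the $(2,1,1)$ case directly; your added self-contained edge count is exactly the content of \Cref{lem:counts}, on which the paper's proof of \Cref{thm:excess-obstruction} rests.
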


Consequently, any valid universal theorem about full tree assignments in the $k$-WPC class must restrict to, or otherwise force, the critical equality. The most direct corrected formulation imposes precisely this necessary equality.

\begin{definition}[Criticality]\label{def:critical}
A hypergraph $H$ on $V$ is \emph{$k$-critical} if $\rho(H)=k(|V|-1)$.
\end{definition}

\begin{conjecture}[Critical form]\label{conj:critical}
Every $k$-critical, $k$-WPC hypergraph has a $k$-distinguishable tree assignment.
\end{conjecture}

Criticality is not merely an edge count verified after a decomposition has been found. Weak partition connectivity supplies $k$ edge-disjoint spanning trees in every assigned graph, and at criticality those trees exhaust all assigned edges. Thus every tree assignment of a critical $k$-WPC hypergraph has an ordered $k$-tree decomposition. The unresolved assertion in \Cref{conj:critical} is the existential statement that some tree assignment admits at least one decomposition whose signature fiber is a singleton.

Our positive result is based on the following certificate. It contains the extremal-layer star constructions as special cases and permits arbitrary assigned trees, including labels of arbitrary rank in interior layers.

\begin{definition}[Layer-contained certificate]\label{def:layer-contained}
A \emph{layer-contained certificate} for a hypergraph $H$ consists of a tree assignment $G=\biguplus_{e\in E(H)}F_e$, an ordered $k$-tree decomposition $\TT=(T_0,\ldots,T_{k-1})$ of $G$, and a map $\ell:E(H)\to\{0,\ldots,k-1\}$ such that $E(F_e)\subseteq E(T_{\ell(e)})$ for every hyperedge occurrence $e$.
\end{definition}

No star hypothesis is imposed, and labels assigned to interior layers may have arbitrary rank.

\begin{theorem}[Layer-contained positive instances]\label{thm:layer-positive}
If a hypergraph $H$ has a layer-contained certificate, then $H$ is $k$-critical and $k$-WPC, and the certified decomposition is the unique ordered $k$-tree decomposition having its signature. Hence the certified tree assignment is $k$-distinguishable.
\end{theorem}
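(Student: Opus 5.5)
Given a layer-contained certificate $(G,\TT,\ell)$, the plan is to prove the three assertions separately: criticality, weak partition connectivity, and uniqueness of the decomposition within its signature fiber.

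\textbf{Criticality.} Since $\TT$ is an ordered $k$-tree decomposition of the full assigned graph $G$, we have $|E(G)|=k(t-1)$. Every tree assignment has $|E(G)|=\rho(H)$, so $\rho(H)=k(t-1)$.

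\textbf{Weak partition connectivity.} We use the quotient-rank argument. Fix a partition $\PP$ and contract each block. For each layer $i$, the tree $T_i$ is spanning, so its image in the quotient multigraph $G/\PP$ is connected on $|\PP|$ vertices. Hence it has at least $|\PP|-1$ edges joining distinct blocks. Each such edge lies in some $F_e$ with $\ell(e)=i$, because layers are unions of whole label classes. For a single label $e$, the tree $F_e$ projects onto a connected graph on the $p_{\PP}(e)$ blocks met by $e$. Consequently the forest rank it contributes in the quotient is exactly $p_{\PP}(e)-1$.

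Summing the quotient ranks of the $F_e$ with $\ell(e)=i$ bounds the rank of the quotient of $T_i$. That rank equals $|\PP|-1$. Summing over $i$ gives
\[
w_H(\PP)=\sum_e (p_{\PP}(e)-1)\ge k(|\PP|-1).
\]
Layer containment is essential here: it gives $\mathrm{rank}(T_i/\PP)\le\sum_{\ell(e)=i}\mathrm{rank}(F_e/\PP)$ by subadditivity of matroid rank.

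\textbf{Uniqueness.} This is the main step. Let $\TT'$ be any ordered $k$-tree decomposition with $\sig_{\TT'}=\sig_{\TT}$. We argue by induction on the layer index from the top, with a symmetric argument from the bottom.

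Consider first the labels $e$ with $\ell(e)=k-1$. Their signature is $(k-1)(|e|-1)$, the maximum possible value. In $\TT'$ the edges of $F_e$ have indices at most $k-1$, so the signature forces every edge of $F_e$ into $T'_{k-1}$. These labels' edges total $t-1$, which is all of $T_{k-1}$. Hence $T'_{k-1}\supseteq T_{k-1}$, and since both have $t-1$ edges, $T'_{k-1}=T_{k-1}$.

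The interior layers are the obstacle: a signature value $i(|e|-1)$ with $0<i<k-1$ could a priori arise from edges split above and below $i$. After removing the top layer, however, the remaining edges carry indices at most $k-2$. The labels with $\ell(e)=k-2$ now attain the maximum of the residual range, and the same argument forces $T'_{k-2}=T_{k-2}$. Iterating downward pins down every layer, so $\TT'=\TT$.

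To make the induction precise, we work at stage $j$ with the fact that $T'_i=T_i$ for all $i>j$. The edges of the remaining labels then lie in $T'_0,\ldots,T'_j$. Any label with $\ell(e)=j$ has signature $j(|e|-1)$, which forces all its edges into $T'_j$. A cardinality count then gives equality. At the base, $T'_0$ is whatever remains, which equals $T_0$. Thus the signature fiber is a singleton, and $G$ is $k$-distinguishable.
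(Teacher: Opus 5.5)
Your proof is correct and follows essentially the same route as the paper's: criticality by edge counting, then weak partition connectivity by subadditivity of quotient graphic rank within each layer (using that each $T_i$ is exactly the union of the label classes assigned to it), then uniqueness by a layer-by-layer capacity induction. The only difference is the direction of the induction. You fix layers from the top down, using that $j(|e|-1)$ is the maximal signature once the label's edges are confined to $T'_0,\ldots,T'_j$. The paper instead inducts upward from layer $0$ with the mirror-image minimality argument.
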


The certificate is stable under the natural gluing operation.

\begin{theorem}[One-vertex sum closure]\label{thm:sum-closure}
Let $H^{(1)}$ and $H^{(2)}$ have layer-contained certificates for the same integer $k$. Suppose that their vertex sets meet in exactly one vertex $r$ and that their hyperedge-occurrence sets are disjoint. Then the one-vertex sum $H^{(1)}\vee_r H^{(2)}$ has a layer-contained certificate. In particular, it is $k$-critical, $k$-WPC, and has a $k$-distinguishable tree assignment.
\end{theorem}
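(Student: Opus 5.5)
The plan is to build the certificate for the sum directly by gluing, and then read off the remaining assertions from \Cref{thm:layer-positive}. Write $V_j=V(H^{(j)})$, so $V_1\cap V_2=\{r\}$. For $j=1,2$ let the given certificates be $(G^{(j)},\TT^{(j)},\ell^{(j)})$ with $G^{(j)}=\biguplus_{e\in E(H^{(j)})}F^{(j)}_e$ and $\TT^{(j)}=(T^{(j)}_0,\ldots,T^{(j)}_{k-1})$. The hypergraph $H=H^{(1)}\vee_r H^{(2)}$ has vertex set $V_1\cup V_2$ and hyperedge-occurrence set $E(H^{(1)})\dot\cup E(H^{(2)})$; this union is disjoint by hypothesis, so labels never collide.

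First I define the tree assignment of $H$ by $F_e\defeq F^{(j)}_e$ for $e\in E(H^{(j)})$. Each $F_e$ is a simple tree on $e$, so this is a valid tree assignment, and its assigned graph is $G=G^{(1)}\uplus G^{(2)}$ as a labelled multigraph. Next I set $\ell(e)\defeq\ell^{(j)}(e)$ for $e\in E(H^{(j)})$ and $T_i\defeq T^{(1)}_i\cup T^{(2)}_i$ for each $i\in\{0,\ldots,k-1\}$.

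The key step is to check that each $T_i$ is a spanning tree of $V_1\cup V_2$. The two pieces are trees on vertex sets meeting only in $r$, so their union is connected. It has $(|V_1|-1)+(|V_2|-1)=|V_1\cup V_2|-1$ edges, hence it is a tree. Moreover, no parallel pair of edge occurrences can arise, since an edge of $T^{(1)}_i$ has both ends in $V_1$ and an edge of $T^{(2)}_i$ has both ends in $V_2$, and an edge cannot have both ends in $\{r\}$ because trees are loopless. The sets $E(T_i)$ partition $E(G)$, because the $E(T^{(j)}_i)$ partition $E(G^{(j)})$ for each $j$. Layer containment is inherited: $E(F_e)\subseteq E(T^{(j)}_{\ell(e)})\subseteq E(T_{\ell(e)})$. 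Therefore $(G,\TT,\ell)$ is a layer-contained certificate for $H$.

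Finally, \Cref{thm:layer-positive} applied to this certificate gives that $H$ is $k$-critical and $k$-WPC, and that $G$ is $k$-distinguishable. The only point requiring care is the spanning-tree verification above: the one-vertex intersection is exactly what guarantees both acyclicity and the correct edge count. Everything else is bookkeeping, and in particular no direct partition or uniqueness argument is needed, since all of that is delegated to \Cref{thm:layer-positive}.
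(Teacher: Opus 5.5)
Your proposal is correct and follows essentially the same route as the paper: glue the two certificates layer by layer, use the one-vertex intersection to show that each $T_i=T_i^{(1)}\cup T_i^{(2)}$ is a spanning tree (this is the paper's \Cref{lem:tree-sum}), inherit $\ell$ and layer containment, and delegate the remaining conclusions to \Cref{thm:layer-positive}. Your extra check ruling out parallel pairs is harmless but already follows from connectivity together with the edge count.
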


The logical structure is summarized in \Cref{fig:logic}. The two right-hand nodes form a case distinction within the class satisfying the lower bound; neither is asserted without its displayed additional condition.

\begin{figure}[ht]
\centering
\begin{tikzpicture}[x=1mm,y=1mm,>=Latex,thick,
 every node/.style={draw,rounded corners,align=center,inner sep=4pt,font=\small}]
\node[text width=28mm] (weak) at (15,24) {$k$-weak partition\\connectivity};
\node[text width=28mm] (lower) at (55,24) {$\rho(H)\ge k(t-1)$};
\node[text width=34mm] (over) at (104,36) {$\rho(H)>k(t-1)$\\no full decomposition};
\node[text width=34mm] (critical) at (104,12) {$\rho(H)=k(t-1)$\\critical regime};
\node[text width=34mm] (cert) at (24,-10) {layer-contained certificate};
\node[text width=48mm] (positive) at (82,-10) {$k$-critical, $k$-WPC, and\\signature-unique};
\draw[->] (weak)--(lower);
\draw[->] (lower)--node[draw=none,above left,inner sep=1pt] {if strict} (over);
\draw[->] (lower)--node[draw=none,below left,inner sep=1pt] {if equality} (critical);
\draw[->] (cert)--(positive);
\end{tikzpicture}
\caption{The edge-count dichotomy and the positive certificate. Weak partition connectivity gives a lower bound, whereas a full ordered tree decomposition requires equality.}
\label{fig:logic}
\end{figure}

\section{The excess obstruction and the critical regime}\label{sec:excess}

This section establishes the excess obstruction and the exact packing statement available in the critical regime.

\begin{lemma}[Discrete-partition lower bound]\label{lem:discrete}
If $H$ is $k$-WPC on $t$ vertices, then $\rho(H)\ge k(t-1)$.
\end{lemma}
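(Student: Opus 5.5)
The plan is to specialize the defining inequality \eqref{eq:kwpc} to the discrete partition $\PP_0=\{\{v\}:v\in V\}$, which has $|\PP_0|=t$ blocks. First I will note that $\PP_0$ is a genuine partition of $V$, so the $k$-WPC hypothesis applies to it and gives $w_H(\PP_0)\ge k(t-1)$.

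Next I will compute $p_{\PP_0}(e)$ for each hyperedge occurrence $e$. Every block of $\PP_0$ is a singleton, so $e$ meets the block $\{v\}$ exactly when $v\in e$. Hence $p_{\PP_0}(e)=|e|$. Summing over the occurrences gives
\[
w_H(\PP_0)=\sum_{e\in E}(|e|-1)=\rho(H).
\]
Here each occurrence is counted with its multiplicity, exactly as in both definitions. Combining this with the first step yields $\rho(H)\ge k(t-1)$.

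I expect no real obstacle. The only points to check are bookkeeping. Occurrences are summed as labelled objects in both $w_H$ and $\rho$, so the two sums agree term by term. If singleton occurrences have not yet been removed, each contributes $0$ to both sides, so the argument is unaffected. For $t=1$ the inequality reads $\rho(H)\ge0$, which holds trivially.
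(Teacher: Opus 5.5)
Your proposal is correct and matches the paper's proof exactly: apply \eqref{eq:kwpc} to the discrete partition $\PP_0$, observe that $p_{\PP_0}(e)=|e|$ so that $w_H(\PP_0)=\rho(H)$, and use $|\PP_0|-1=t-1$. Your remarks about labelled occurrences, singleton occurrences, and the case $t=1$ are harmless bookkeeping and go slightly beyond what the paper spells out.
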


\begin{proof}
Apply \eqref{eq:kwpc} to the discrete partition $\PP_0$. Since $p_{\PP_0}(e)=|e|$ for every hyperedge occurrence $e$, one has $w_H(\PP_0)=\sum_e(|e|-1)=\rho(H)$ and $|\PP_0|-1=t-1$.
\end{proof}

\begin{lemma}[Tree-assignment and decomposition counts]\label{lem:counts}
Every tree assignment $G=\biguplus_{e}F_e$ of $H$ has $|E(G)|=\rho(H)$ edges. Every ordered $k$-tree decomposition on a $t$-vertex set has $k(t-1)$ edges.
\end{lemma}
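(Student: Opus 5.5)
The proof is a direct count in each of the two claims, using only the definitions of a tree assignment and of an ordered $k$-tree decomposition.

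For the first claim, fix a tree assignment $G=\biguplus_{e\in E}F_e$. Each $F_e$ is a simple tree on the vertex set $e$, and a tree on $|e|$ vertices has exactly $|e|-1$ edges. We have assumed every occurrence has cardinality at least two. Even without that reduction, a singleton occurrence contributes the edgeless one-vertex tree, which has $0=|e|-1$ edges, so the count is uniform over all occurrences. The union defining $G$ is disjoint at the level of labelled edge occurrences, since every edge of $F_e$ carries the label $e$ and distinct occurrences carry distinct labels. Parallel edges coming from different $F_e$ are therefore counted separately, and no edge is counted twice. Summing over $e\in E$ gives
\[
|E(G)|=\sum_{e\in E}|E(F_e)|=\sum_{e\in E}(|e|-1)=\rho(H).
\]

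For the second claim, let $\TT=(T_0,\ldots,T_{k-1})$ be an ordered $k$-tree decomposition on a vertex set of size $t$. Each $T_i$ is a spanning tree in the graphic-matroid sense, so it is an acyclic connected spanning subgraph containing no parallel pair. Such a subgraph has exactly $t-1$ edge occurrences; this is the rank of the graphic matroid of a connected multigraph on $t$ vertices. The sets $E(T_i)$ are pairwise disjoint by the definition of an ordered partition. Hence the decomposition uses $\sum_{i}|E(T_i)|=k(t-1)$ edge occurrences. When $\TT$ is a decomposition of all of $E(G)$, this number equals $|E(G)|$.

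There is essentially no obstacle here. The only care needed is to justify disjointness of the unions at the level of labelled occurrences, so that parallel edges with distinct labels are not identified, and to note the convention for $t=1$ and for singleton occurrences, where the counts reduce to $0$.
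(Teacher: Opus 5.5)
Your proposal is correct and follows essentially the same argument as the paper: each assigned tree $F_e$ contributes $|e|-1$ edges and the labelled union is disjoint, giving $\rho(H)$; each of the $k$ disjoint spanning-tree layers has $t-1$ edges, giving $k(t-1)$. Your extra remarks on singleton occurrences and on parallel edges with distinct labels are consistent with the paper's conventions and do not change the argument.
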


\begin{proof}
Each $F_e$ is a tree on $|e|$ vertices and therefore has $|e|-1$ edges. Because the label classes are disjoint as edge occurrences, $|E(G)|=\sum_e|E(F_e)|=\sum_e(|e|-1)=\rho(H)$. If $E(G)=E(T_0)\dot\cup\cdots\dot\cup E(T_{k-1})$ and every $T_i$ is a spanning tree on $t$ vertices, then each $T_i$ has $t-1$ edges, so $|E(G)|=\sum_i|E(T_i)|=k(t-1)$.
\end{proof}

\begin{proof}[Proof of \Cref{thm:excess-obstruction}]
A $k$-distinguishable tree assignment has, by definition, an ordered $k$-tree decomposition. Applying \Cref{lem:counts} to the same labelled graph gives $\rho(H)=|E(G)|=k(t-1)$. Hence an overfull $k$-WPC hypergraph cannot have a $k$-distinguishable full tree assignment.
\end{proof}

\begin{proof}[Proof of \Cref{thm:full-edge-counterexamples}]
Let $\PP$ be a partition of $V$ with $s$ blocks. Every hyperedge occurrence of $H_{t,k,q}$ equals $V$ and therefore meets all $s$ blocks. Thus each occurrence contributes $s-1$, and $w_{H_{t,k,q}}(\PP)=(k+q)(s-1)\ge k(s-1)$. Hence $H_{t,k,q}$ is $k$-WPC\@. Its excess is $\rho(H_{t,k,q})=(k+q)(t-1)>k(t-1)$, so \Cref{thm:excess-obstruction} applies. When $t=2$ and $k=q=1$, the full hyperedge is the unique two-vertex edge, giving two labelled parallel copies.
\end{proof}

\begin{remark}[Full assignments versus distinguishable subgraphs]\label{rem:subgraph}
The obstruction concerns the literal full-assignment statement. A result that asks only for a $k$-distinguishable labelled subgraph of a tree assignment may delete assigned graph edges and is not contradicted by overfullness. The distinction is essential: a full tree assignment always has $\rho(H)$ edges, whereas a retained subgraph may have exactly $k(t-1)$ edges.
\end{remark}

\begin{example}[The minimal literal counterexample]\label{ex:minimal}
Let $V=\{a,b\}$, let $k=1$, and let $H$ consist of two labelled copies of $\{a,b\}$. The only nontrivial partition has weak excess $2\ge1$, so $H$ is $1$-WPC\@. Every full tree assignment is the two-edge labelled multigraph on $a,b$, while a one-tree decomposition on two vertices has exactly one edge. Thus the conjecture fails before signature uniqueness is considered.
\end{example}

The following crossing bound separates the existence of a tree packing from the requirement that the packing exhaust all assigned edges.

\begin{lemma}[Crossing inequality for an assigned tree]\label{lem:tree-cross-lower}
Let $F_e$ be a tree on a hyperedge $e$, and let $\PP$ be a partition of the ambient vertex set. Then $\cross_{F_e}(\PP)\ge p_{\PP}(e)-1$, where $\cross_{F_e}(\PP)$ is the number of edges of $F_e$ whose endpoints lie in different blocks of $\PP$.
\end{lemma}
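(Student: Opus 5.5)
We argue by contracting the partition onto the tree. Let $e$ meet the blocks $B_1,\ldots,B_p$ of $\PP$, where $p=p_{\PP}(e)$. Consider the multigraph $F_e/\PP$ obtained from $F_e$ by identifying, for each $j$, all vertices of $e\cap B_j$ into a single vertex $b_j$. Edges of $F_e$ with both endpoints in the same block become loops, and we discard them. An edge whose endpoints lie in different blocks becomes an edge between distinct quotient vertices $b_i\ne b_j$. Hence the non-loop edges of $F_e/\PP$ correspond bijectively to the crossing edges counted by $\cross_{F_e}(\PP)$.

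The key step is connectivity of the quotient. Since $F_e$ is a tree on $e$, it is connected. Contraction maps a path in $F_e$ to a walk in the quotient, possibly with loops, and deleting the loops still leaves a walk. So the quotient graph on $\{b_1,\ldots,b_p\}$ with the non-loop edges is connected. A connected graph on $p$ vertices has at least $p-1$ edges. Therefore $\cross_{F_e}(\PP)\ge p-1=p_{\PP}(e)-1$.

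The case $p=1$ is trivial, since the bound is then $0$. Blocks of $\PP$ that miss $e$ play no role, because only vertices of $e$ occur in $F_e$.

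I do not expect a real obstacle here. The only point needing care is that the quotient's vertex set is exactly the $p$ blocks met by $e$. This holds because every vertex of $e$ lies in some block, and every block counted in $p_{\PP}(e)$ contains a vertex of $e$.

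As an alternative phrasing, one can use the graphic matroid. Deleting the $\cross_{F_e}(\PP)$ crossing edges from $F_e$ leaves a forest whose components each lie inside a single block. There are therefore at least $p$ such components. Deleting $c$ edges from a tree produces exactly $c+1$ components, so $c+1\ge p$, which gives the same inequality.
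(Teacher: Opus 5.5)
Your proposal is correct and takes the same approach as the paper. The paper also identifies the vertices of $F_e$ within each block, discards loops, notes that the crossing edges are exactly the remaining edges of a connected multigraph on the $p_{\PP}(e)$ met blocks, and concludes that there are at least $p_{\PP}(e)-1$ of them. Your alternative component-counting version, where deleting $c$ edges from a tree leaves $c+1$ components and each met block contains at least one of them, is an equally valid repackaging of the same argument.
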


\begin{proof}
Identify all vertices of $F_e$ that lie in the same block of $\PP$, and delete the resulting loops. The quotient is a connected multigraph whose vertices are precisely the $p_{\PP}(e)$ blocks met by $e$. Its nonloop edge occurrences are in bijection with the edges of $F_e$ crossing $\PP$. A connected multigraph on $p_{\PP}(e)$ vertices has at least $p_{\PP}(e)-1$ edges, which proves the claim.
\end{proof}

\begin{theorem}[Packing for every assignment; decomposition at equality]\label{thm:any-assignment-packing}
Let $H$ be $k$-WPC and let $G$ be any tree assignment of $H$. Then $G$ contains $k$ pairwise edge-disjoint spanning trees. If, in addition, $H$ is $k$-critical, those trees exhaust $E(G)$ and hence form a full ordered $k$-tree decomposition after an arbitrary ordering.
\end{theorem}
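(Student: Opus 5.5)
The plan is to deduce the packing from the Nash--Williams--Tutte tree-packing theorem~\cite{NW61,Tutte61}. For graphs it says that a multigraph $G$ on $V$ contains $k$ pairwise edge-disjoint spanning trees if and only if
\[
\cross_G(\PP)\ge k(|\PP|-1)\qquad\text{for every partition $\PP$ of $V$},
\]
where $\cross_G(\PP)$ counts the edge occurrences of $G$ whose endpoints lie in different blocks of $\PP$. The labelled multigraph $G=\biguplus_e F_e$ is an ordinary multigraph once labels are forgotten, since parallel edges are allowed and spanning trees are taken in the graphic-matroid sense. So it suffices to verify this partition condition for $G$.

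Fix a partition $\PP$ of $V$. Crossing edges of $G$ are counted occurrence by occurrence, and each occurrence belongs to exactly one label class $F_e$, so $\cross_G(\PP)=\sum_{e\in E}\cross_{F_e}(\PP)$. By \Cref{lem:tree-cross-lower}, each summand is at least $p_{\PP}(e)-1$. Therefore
\[
\cross_G(\PP)\ge\sum_e\bigl(p_{\PP}(e)-1\bigr)=w_H(\PP)\ge k(|\PP|-1),
\]
where the last inequality is the $k$-WPC hypothesis \eqref{eq:kwpc}. The Nash--Williams--Tutte theorem then yields $k$ edge-disjoint spanning trees $T_0,\ldots,T_{k-1}$ in $G$.

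For the critical part, these trees together use $k(t-1)$ edge occurrences, each $T_i$ having $t-1$ edges. By \Cref{lem:counts} and criticality, $|E(G)|=\rho(H)=k(t-1)$. A disjoint union of subsets of $E(G)$ with total size $|E(G)|$ must equal $E(G)$, so the trees exhaust $E(G)$. Listing them in any order gives an ordered $k$-tree decomposition.

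The only delicate point is the legitimacy of the black-box application. We need that the Nash--Williams--Tutte theorem holds for multigraphs with parallel edges and counts crossing edges with multiplicity. We also need that no loops arise: each $F_e$ is a simple tree, so $G$ is loopless. Both hold in the standard statement, so no genuine obstacle is expected. The case $t=1$ is trivial, since the empty trees work.
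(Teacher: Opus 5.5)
Your proposal is correct and matches the paper's proof step for step. You sum \Cref{lem:tree-cross-lower} over the label classes to verify the Nash--Williams--Tutte partition condition for $G$, and then the count $|E(G)|=\rho(H)=k(t-1)$ at criticality forces the $k$ trees to exhaust $E(G)$.
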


\begin{proof}
For every partition $\PP$ of $V$, summing \Cref{lem:tree-cross-lower} over the label classes gives
\begin{equation}\label{eq:graph-dominates-hypergraph}
 \cross_G(\PP)=\sum_{e\in E(H)}\cross_{F_e}(\PP)
 \ge\sum_{e\in E(H)}(p_{\PP}(e)-1)=w_H(\PP)\ge k(|\PP|-1).
\end{equation}
The Nash--Williams--Tutte theorem for multigraphs therefore yields $k$ pairwise edge-disjoint spanning trees in $G$~\cite{NW61,Tutte61}. Their union has exactly $k(t-1)$ edges. If $H$ is $k$-critical, then \Cref{lem:counts} gives $|E(G)|=\rho(H)=k(t-1)$, so every edge of $G$ belongs to the union. Ordering the resulting trees gives a full ordered decomposition.
\end{proof}

\begin{corollary}[The case $k=1$]\label{cor:k-one}
The critical conjecture is true for $k=1$.
\end{corollary}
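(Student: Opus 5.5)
For $k=1$, I would take an arbitrary tree assignment $G$ of a $1$-critical, $1$-WPC hypergraph $H$ and show that it is $1$-distinguishable. The first step uses \Cref{thm:any-assignment-packing} with $k=1$. It says that $G$ contains a spanning tree and that, by criticality, this tree exhausts $E(G)$. Hence $G$ itself is a spanning tree $T_0$ of $V$, and $\TT=(T_0)$ is an ordered $1$-tree decomposition. So at least one decomposition exists.

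The second step is signature uniqueness. With $k=1$ there is only the index $i=0$, so every ordered $1$-tree decomposition $\TT'=(T_0')$ has $\sig_{\TT'}(e)=0$ for every $e$. All decompositions therefore share a signature, and it suffices to show that there is only one ordered $1$-tree decomposition. This is immediate from the definition: an ordered $1$-tree decomposition is an ordered partition of $E(G)$ into a single part $E(T_0')$, so $E(T_0')=E(G)$ and $T_0'=T_0$. The signature fiber is thus a singleton, and $G$ is $1$-distinguishable.

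No step here is a genuine obstacle. The only point needing care is that a spanning tree is taken in the graphic-matroid sense, so $G$ must contain no parallel edge occurrences. This is automatic: $G$ is a spanning tree, which we already obtained from \Cref{thm:any-assignment-packing} via the edge count $|E(G)|=\rho(H)=t-1$. The trivial case $t=1$ follows from the stated convention that the edgeless graph is a tree.

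As an alternative to invoking Nash--Williams--Tutte, one could observe directly that $1$-WPC gives $\cross_G(\PP)\ge|\PP|-1$ for every partition, by \eqref{eq:graph-dominates-hypergraph}. This makes $G$ connected, and a connected graph with $t-1$ edges is a tree.
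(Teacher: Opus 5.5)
Your proposal is correct and matches the paper's proof: \Cref{thm:any-assignment-packing} together with the edge count $|E(G)|=\rho(H)=t-1$ shows that any tree assignment $G$ is itself a spanning tree, and $(G)$ is then the only ordered one-tree decomposition, so its signature fiber is a singleton. Your alternative connectivity argument via \eqref{eq:graph-dominates-hypergraph} is also valid and avoids citing Nash--Williams--Tutte, though the proof does not need it.
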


\begin{proof}
Let $H$ be $1$-critical and $1$-WPC, and choose any tree assignment $G$. By \Cref{thm:any-assignment-packing}, $G$ contains a spanning tree. By criticality and \Cref{lem:counts}, $G$ has exactly $t-1$ edges, so that spanning tree is all of $G$. There is only one ordered one-tree decomposition, namely $(G)$, and its signature is therefore unique.
\end{proof}

Thus the critical conjecture is not an existence problem for tree packings. It asks whether there exist a tree assignment and an ordered decomposition whose signature fiber is a singleton.

\begin{proposition}[Exact surplus of the full-edge bundle]\label{prop:bundle-surplus}
If $\PP$ has $s$ blocks, then $w_{H_{t,k,q}}(\PP)-k(s-1)=q(s-1)$, while $\rho(H_{t,k,q})-k(t-1)=q(t-1)$. In particular, every nontrivial partition has positive weak-connectivity margin although the full tree assignment is overfull.
\end{proposition}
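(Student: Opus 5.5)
The proposition is a direct computation, and I would reuse the partition count already carried out in the proof of \Cref{thm:full-edge-counterexamples}.

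\emph{Partition surplus.} Let $\PP$ be a partition of $V$ with $s$ blocks. Every hyperedge occurrence of $H_{t,k,q}$ equals $V$, so it meets every block of $\PP$ and $p_{\PP}(e)=s$. Summing $p_{\PP}(e)-1=s-1$ over the $k+q$ occurrences gives $w_{H_{t,k,q}}(\PP)=(k+q)(s-1)$. Subtracting $k(s-1)$ leaves $q(s-1)$.

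\emph{Excess surplus.} Each occurrence has $|e|=t$ and contributes $t-1$ to $\rho$. Hence $\rho(H_{t,k,q})=(k+q)(t-1)$ and $\rho(H_{t,k,q})-k(t-1)=q(t-1)$. This is also the special case $\PP=\PP_0$, $s=t$ of the first formula, as in \Cref{lem:discrete}.

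\emph{The ``in particular'' clause.} A partition is nontrivial exactly when $s\ge2$. Since $q\ge1$, the margin $q(s-1)$ is then at least $q>0$. Since $t\ge2$, the overfullness $q(t-1)$ is positive. By \Cref{lem:counts}, every full tree assignment has $(k+q)(t-1)>k(t-1)$ edges, which is what ``overfull'' means here.

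No step is a real obstacle. The only point needing care is to state that the trivial partition $s=1$ has zero margin, so the claim of positive margin is made only for $s\ge2$.
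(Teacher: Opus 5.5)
Your proposal is correct and follows the paper's proof: both compute $w_{H_{t,k,q}}(\PP)=(k+q)(s-1)$ and obtain the excess identity as the discrete-partition case. Your explicit handling of the trivial partition $s=1$ and of the positivity of $q(t-1)$ just spells out the ``in particular'' clause, which the paper leaves implicit.
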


\begin{proof}
The first identity follows from $w_{H_{t,k,q}}(\PP)=(k+q)(s-1)$; the second is the specialization to the discrete partition.
\end{proof}

\begin{proposition}[Two-vertex classification]\label{prop:two-vertex-classification}
Let $L_m$ be the labelled multihypergraph on $\{x,y\}$ consisting of $m$ copies of $\{x,y\}$. For fixed $k\ge1$, the hypergraph $L_m$ is $k$-WPC if and only if $m\ge k$, and it has a $k$-distinguishable full tree assignment if and only if $m=k$.
\end{proposition}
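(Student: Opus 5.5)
The proof splits into the weak-partition-connectivity part and the distinguishability part, and the latter is handled by treating $m\ge k$ and $m=k$ separately.

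\emph{WPC part.} On $V=\{x,y\}$ there are only two partitions. The discrete partition $\PP_0$ has two blocks. Each of the $m$ occurrences meets both blocks, so $w_{L_m}(\PP_0)=m$, and the condition \eqref{eq:kwpc} reads $m\ge k$. The trivial partition has one block; both sides of \eqref{eq:kwpc} vanish, so it imposes nothing. Hence $L_m$ is $k$-WPC exactly when $m\ge k$.

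\emph{Distinguishability, necessity.} Suppose $L_m$ has a $k$-distinguishable full tree assignment. By \Cref{thm:excess-obstruction}, $\rho(L_m)=k(t-1)$. Here $t=2$ and $\rho(L_m)=m$, so $m=k$.

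\emph{Distinguishability, sufficiency.} Let $m=k$ and label the occurrences $e_0,\dots,e_{k-1}$. Each $e_j$ has the unique tree assignment $F_{e_j}$, the single edge $xy$ with label $e_j$. The assigned graph $G$ is therefore $k$ labelled parallel edges. A spanning tree on two vertices is a single edge, so an ordered $k$-tree decomposition is the same thing as a bijection $\pi$ from the $k$ edges to the layers $\{0,\dots,k-1\}$. Its signature is $\sig_{\TT}(e_j)=\pi(e_j)$. Thus the signature determines $\pi$, and hence determines $\TT$. So every decomposition, for instance $T_i=\{e_i\}$, is signature-unique, and $G$ is $k$-distinguishable.

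An alternative for sufficiency is to note that this decomposition is a layer-contained certificate with $\ell(e_i)=i$ and invoke \Cref{thm:layer-positive}. That theorem is stated but proved only later in the paper, so the direct signature argument above is the cleaner route.

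No step here is a real obstacle. The only points needing care are that the trivial partition contributes no constraint, and that with two vertices a spanning tree cannot contain parallel edges, so each layer receives exactly one edge.
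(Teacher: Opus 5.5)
Your proof is correct and follows essentially the same route as the paper. The only nontrivial partition gives $w_{L_m}=m$, and the edge count (which you package via \Cref{thm:excess-obstruction} rather than counting directly) forces $m=k$. For $m=k$, each single-edge label's signature coordinate is its layer index, so the decomposition is signature-unique.
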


\begin{proof}
The only nontrivial partition is $\{\{x\},\{y\}\}$, for which $w_{L_m}=m$. Hence $k$-WPC is equivalent to $m\ge k$. A full tree assignment consists of the same $m$ labelled parallel graph edges, whereas a $k$-tree decomposition on two vertices contains exactly one edge in each layer and therefore exactly $k$ edges. Thus a decomposition exists only if $m=k$. When $m=k$, order the labels as $e_0,\ldots,e_{k-1}$ and place the edge labelled $e_i$ in $T_i$. Its signature coordinate is $i$. Any decomposition with the same signature must place each labelled edge in the same layer, so the displayed decomposition is signature-unique.
\end{proof}

\begin{remark}[Sharpness of the numerical obstruction]\label{rem:sharp}
For a $k$-WPC hypergraph, the inequality $\rho(H)<k(t-1)$ is impossible by \Cref{lem:discrete}. Hence overfullness is the only failure detected by the discrete partition. After imposing $\rho(H)=k(t-1)$, \Cref{thm:any-assignment-packing} also removes the existence problem for a full decomposition. The unresolved implication is the following signature question: does every $k$-critical, $k$-WPC hypergraph admit a tree assignment with a signature-unique ordered decomposition?
\end{remark}

\section{Critical normalization and the intersection-matrix route}\label{sec:matrix}

This section makes precise the role of criticality in the intersection-matrix argument of~\cite{GLSTW24}. The equality $\rho(H)=k(t-1)$ is not needed merely to deduce weak partition connectivity from the weight inequalities. It is the condition under which a full tree assignment supplies exactly the $k(t-1)$ labelled bottom rows required to complement the cycle-space block to a square minor.

\subsection{Support hypergraphs and partition inequalities}

Let $I_1,\ldots,I_t\subseteq[n]$. For $s\in[n]$, define $e_s=\{j\in[t]:s\in I_j\}$. Retain the labelled occurrence $\varepsilon_s=(s,e_s)$ precisely when $|e_s|\ge2$, and define the labelled support hypergraph $H(I_1,\ldots,I_t)=([t],E_I)$, where $E_I=\{\varepsilon_s:s\in[n],\ |e_s|\ge2\}$. Equal supports with different indices remain distinct occurrences.

For nonempty $J\subseteq[t]$, set
\begin{equation}\label{eq:wt-def}
 \wt(I_J)\defeq\sum_{j\in J}|I_j|-\left|\bigcup_{j\in J}I_j\right|
 =\sum_{s=1}^n\max\{0,|e_s\cap J|-1\}.
\end{equation}
The second identity follows by collecting the contribution of each index $s$. We use the same definition, with $[n]$ replaced by an arbitrary finite ground set, whenever the sets under consideration are indexed by such a ground set. Supports of size at most one contribute zero, and $\wt(I_{[t]})=\rho(H(I_1,\ldots,I_t))$.

\begin{lemma}[Partition inequality from weights]\label{lem:weight-partition}
Let $H=H(I_1,\ldots,I_t)$. Suppose that $\wt(I_J)\le k(|J|-1)$ for every nonempty proper subset $J\subsetneq[t]$ and that $\wt(I_{[t]})\ge k(t-1)$. Then $H$ is $k$-WPC\@. If the total-weight inequality is an equality, then $H$ is also $k$-critical.
\end{lemma}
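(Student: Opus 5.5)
We compute $w_H(\PP)$ for an arbitrary partition $\PP=\{B_1,\ldots,B_m\}$ of $[t]$ using the second expression in \eqref{eq:wt-def}, and compare it with the block weights $\wt(I_{B_i})$. The trivial partition $m=1$ gives $0\ge 0$, so assume $m\ge2$; then every block $B_i$ is a nonempty proper subset of $[t]$.

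The key identity is pointwise in $s$. For each index $s$ with $|e_s|\ge1$, the blocks met by $e_s$ are exactly those with $|e_s\cap B_i|\ge1$, and $|e_s|=\sum_i|e_s\cap B_i|$. Hence
\[
 |e_s|-1=\bigl(p_{\PP}(e_s)-1\bigr)+\sum_{i=1}^m\max\{0,|e_s\cap B_i|-1\}.
\]
Indeed, each block met contributes $|e_s\cap B_i|=1+\max\{0,|e_s\cap B_i|-1\}$, and blocks not met contribute zero. For supports of size one the left side is zero and $p_{\PP}(e_s)=1$, so these indices drop out consistently. Supports of size zero contribute nothing on either side. Summing over $s$ and using \eqref{eq:wt-def} for $J=[t]$ and for each $J=B_i$ gives the exact decomposition
\[
 \wt(I_{[t]})=w_H(\PP)+\sum_{i=1}^m\wt(I_{B_i}).
\]
Here we use that the retained occurrences of $H$ are exactly the indices with $|e_s|\ge2$, and that the discarded indices contribute zero to $w_H(\PP)$.

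Now apply the hypotheses. We have $\wt(I_{[t]})\ge k(t-1)$, and $\wt(I_{B_i})\le k(|B_i|-1)$ for each proper block. Therefore
\[
 w_H(\PP)\ge k(t-1)-k\sum_{i=1}^m(|B_i|-1)=k(t-1)-k(t-m)=k(m-1),
\]
which is \eqref{eq:kwpc}, so $H$ is $k$-WPC. If the total-weight inequality is an equality, then $\rho(H)=\wt(I_{[t]})=k(t-1)$, as noted after \eqref{eq:wt-def}. This is \Cref{def:critical}.

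The main obstacle is the pointwise identity. In particular, one must carefully account for indices $s$ whose support has size at most one, which are absent from $H$, so that the sums over $[n]$ and over $E_I$ agree. Everything else is routine bookkeeping.
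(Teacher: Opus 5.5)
Your proof is correct and follows the paper's argument essentially step for step. You use the same pointwise identity $|e|-1=(p_{\PP}(e)-1)+\sum_a\max\{0,|e\cap P_a|-1\}$, sum it to the same exact formula $w_H(\PP)=\wt(I_{[t]})-\sum_a\wt(I_{P_a})$, and then apply the hypotheses to the proper blocks in the same way; your explicit handling of supports of size at most one just spells out what the paper compresses into ``summing over the retained occurrences.''
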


\begin{proof}
Let $\PP=\{P_1,\ldots,P_s\}$ be a partition of $[t]$. For every support $e$, if $r=p_{\PP}(e)$, then the nonempty sets $e\cap P_a$ partition $e$, and therefore $|e|-1=(r-1)+\sum_{a=1}^s\max\{0,|e\cap P_a|-1\}$. Summing this identity over the retained occurrences and applying \eqref{eq:wt-def} gives the exact formula
\begin{equation}\label{eq:weight-partition-exact}
 w_H(\PP)=\wt(I_{[t]})-\sum_{a=1}^s\wt(I_{P_a}).
\end{equation}
If $s=1$, both sides of the desired weak partition inequality are zero. If $s\ge2$, every $P_a$ is a nonempty proper subset of $[t]$, and hence $w_H(\PP)\ge k(t-1)-\sum_{a=1}^s k(|P_a|-1)=k(t-1)-k(t-s)=k(s-1)$. Thus $H$ is $k$-WPC\@. If $\wt(I_{[t]})=k(t-1)$, then $\rho(H)=\wt(I_{[t]})=k(t-1)$, which is criticality.
\end{proof}

The preceding implication is reversible at criticality under the canonical support representation.

\begin{proposition}[Exact critical correspondence]\label{prop:critical-weight-equivalence}
Let $H=([t],E)$ be a labelled multihypergraph whose occurrences are indexed by a finite label set $S$, and write $e_s$ for the support of label $s$. Define $I_j=\{s\in S:j\in e_s\}$ for $j\in[t]$, and compute weights in the finite ground set $S$. Then $\wt(I_J)=\sum_{s\in S}\max\{0,|e_s\cap J|-1\}$ for every nonempty $J\subseteq[t]$. Moreover, the following are equivalent:
\begin{enumerate}[label=\textup{(\roman*)},leftmargin=2.2em]
\item $H$ is $k$-critical and $k$-WPC;
\item $\wt(I_{[t]})=k(t-1)$ and $\wt(I_J)\le k(|J|-1)$ for every nonempty proper subset $J\subsetneq[t]$.
\end{enumerate}
\end{proposition}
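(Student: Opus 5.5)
The plan is to first establish the weight identity, and then to prove the two implications separately. The direction (ii)$\Rightarrow$(i) will be an immediate application of \Cref{lem:weight-partition}. The direction (i)$\Rightarrow$(ii) will follow from the exact formula \eqref{eq:weight-partition-exact} applied to a partition with one nontrivial block.

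\emph{Weight identity.} Fix a nonempty $J\subseteq[t]$ and count, for each label $s\in S$, its contribution to $\sum_{j\in J}|I_j|-|\bigcup_{j\in J}I_j|$. By construction, $s\in I_j$ exactly when $j\in e_s$. Hence $s$ is counted $|e_s\cap J|$ times in the sum. It is counted once in the union if $e_s\cap J\neq\emptyset$ and zero times otherwise. The net contribution is therefore $\max\{0,|e_s\cap J|-1\}$, which is the displayed formula and agrees with \eqref{eq:wt-def} over the ground set $S$. I also need to check that $H$ coincides with the support hypergraph $H(I_1,\ldots,I_t)$ formed over $S$. The labelled occurrences of size at least two match, and singleton occurrences contribute zero everywhere under the standing convention. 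Taking $J=[t]$ then gives $\wt(I_{[t]})=\rho(H)$.

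\emph{(ii)$\Rightarrow$(i).} This is exactly \Cref{lem:weight-partition}, including its equality clause, which supplies criticality.

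\emph{(i)$\Rightarrow$(ii).} Criticality together with $\wt(I_{[t]})=\rho(H)$ gives $\wt(I_{[t]})=k(t-1)$. Now take a nonempty proper subset $J$ and the partition $\PP_J$ consisting of $J$ together with the singletons $\{v\}$ for $v\notin J$. It has $s=t-|J|+1\ge2$ blocks. Singleton blocks have weight zero, so \eqref{eq:weight-partition-exact} gives $w_H(\PP_J)=\wt(I_{[t]})-\wt(I_J)$. That formula was derived only from the weight identity, which I have now established in this setting. Applying $k$-WPC to $\PP_J$ yields
\[
k(t-1)-\wt(I_J)\ge k(t-|J|),
\]
which rearranges to $\wt(I_J)\le k(|J|-1)$.

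The only delicate point is the bookkeeping that justifies applying \eqref{eq:weight-partition-exact} and \Cref{lem:weight-partition} with ground set $S$ rather than $[n]$. This requires confirming that those arguments used nothing beyond the weight identity, and that singleton occurrences are harmless. Beyond this, the argument is routine.
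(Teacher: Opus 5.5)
Your proof is correct and follows the paper's argument. You use the same label-by-label count for the weight identity, \Cref{lem:weight-partition} (after identifying $S$ with $[|S|]$) for (ii)$\Rightarrow$(i), and the partition $\PP_J$ with the $k$-WPC inequality for (i)$\Rightarrow$(ii); the only cosmetic difference is that you read off $w_H(\PP_J)=\wt(I_{[t]})-\wt(I_J)$ from \eqref{eq:weight-partition-exact}, whereas the paper re-derives that identity per label directly for $\PP_J$.
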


\begin{proof}
The weight identity is the same label-by-label count as in \eqref{eq:wt-def}. After choosing a bijection from $S$ to $[|S|]$, \Cref{lem:weight-partition} shows that (ii) implies (i); the assertion is independent of the chosen bijection. For the converse, assume (i). The total equality is $\wt(I_{[t]})=\rho(H)=k(t-1)$. Fix a nonempty proper subset $J\subsetneq[t]$ and consider the partition $\PP_J=\{J\}\cup\{\{v\}:v\in[t]\setminus J\}$. It has $|\PP_J|-1=t-|J|$. For a label $s$, the blocks of $\PP_J$ met by $e_s$ consist of the block $J$ when $e_s\cap J\ne\varnothing$, together with one singleton block for each vertex of $e_s\setminus J$. Consequently, $(|e_s|-1)-(p_{\PP_J}(e_s)-1)=\max\{0,|e_s\cap J|-1\}$. Summing over $s$ yields $\rho(H)-w_H(\PP_J)=\wt(I_J)$. Since $H$ is $k$-WPC and $k$-critical, $\wt(I_J)=k(t-1)-w_H(\PP_J)\le k(t-1)-k(t-|J|)=k(|J|-1)$. Thus (ii) holds.
\end{proof}

\subsection{The cycle-space block and the selected determinant minor}

Let $K_t$ be the complete graph on $[t]$, let $m=\binom{t}{2}$, and let $r=m-(t-1)=\binom{t-1}{2}$. Fix a field $\FFq$, regard $x_1,\ldots,x_n$ as algebraically independent indeterminates over $\FFq$, and take all matrix ranks below over the rational function field $\FFq(x_1,\ldots,x_n)$. Orient each edge $\{u,v\}$ of $K_t$ from the smaller endpoint to the larger, and order the edges lexicographically. Let $D_t$ be the resulting oriented vertex--edge incidence matrix; its kernel is the cycle space of $K_t$ over $\FFq$.

Let $B_t$ be the $r\times m$ matrix used in~\cite{GLSTW24}: the row indexed by $1\le u<v\le t-1$ is the circulation vector of the triangle $u,v,t$, with signs taken relative to the fixed edge orientations. Each such row belongs to $\ker D_t$. The rows are linearly independent because the row indexed by $(u,v)$ is the only one having a nonzero coordinate on the non-star edge $\{u,v\}$. Moreover, $D_t$ has rank $t-1$ over every field. Indeed, the sum of its vertex rows is zero, so its rank is at most $t-1$; after deleting the row indexed by $t$, the columns indexed by $\{1,t\},\ldots,\{t-1,t\}$ form a nonsingular diagonal submatrix up to signs, so its rank is at least $t-1$. Rank--nullity therefore gives $\dim\ker D_t=m-(t-1)=r$. Hence the displayed triangle circulations form a basis of the cycle space. When $t=2$, $B_t$ is the empty $0\times1$ matrix.

\begin{lemma}[Cycle-matrix deletion criterion]\label{lem:cycle-deletion}
For $X\subseteq E(K_t)$, the restricted matrix $B_t[E(K_t)\setminus X]$ has full row rank $r$ if and only if $X$ is a forest. In particular, if $|X|=t-1$, the square matrix $B_t[E(K_t)\setminus X]$ is nonsingular if and only if $X$ is a spanning tree.
\end{lemma}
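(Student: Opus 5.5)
The plan is to identify the row space of $B_t$ with the cycle space of $K_t$ and then use the duality between cycles and cuts. Since the rows of $B_t$ form a basis of $\ker D_t$, the row space of $B_t$ is the cycle space $\mathcal C$ of $K_t$ over $\FFq$. Row rank is unchanged when columns outside $Y=E(K_t)\setminus X$ are deleted only if no nonzero row combination is supported inside $X$. Precisely, $B_t[Y]$ has full row rank $r$ exactly when the linear map $y\mapsto yB_t$, followed by restriction to $Y$, is injective. Since $y\mapsto yB_t$ is itself injective, this is equivalent to requiring that no nonzero vector of $\mathcal C$ vanishes on $Y$, that is, no nonzero circulation has support contained in $X$.

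The main step is the claim that $\mathcal C$ contains a nonzero vector supported in $X$ if and only if $X$ contains a cycle. I would prove it in two directions.

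\begin{enumerate}[label=\textup{(\roman*)},leftmargin=2.2em]
\item If $X$ contains a cycle, its signed circulation vector, with signs relative to the fixed orientations, lies in $\ker D_t=\mathcal C$. It is nonzero and supported in $X$, because each cycle edge has coefficient $\pm1$, which is nonzero in any field.
\item Conversely, let $z\in\ker D_t$ be nonzero with support $S\subseteq X$. If $X$ is a forest, then so is $S$, so $S$ has a vertex of degree one. The incidence row of that vertex, applied to $z$, gives $\pm z_f=0$ for the unique support edge $f$ at that vertex. This contradicts $f\in S$.
\end{enumerate}

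This argument is characteristic-free, which matters here because $\FFq$ is arbitrary; the leaf argument avoids any sign or characteristic issues.

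For the second sentence, suppose $|X|=t-1$. Then $|Y|=m-(t-1)=r$, so $B_t[Y]$ is square and is nonsingular exactly when it has rank $r$, that is, exactly when $X$ is a forest. A forest on $t$ vertices with $t-1$ edges is a spanning tree, and conversely. The case $t=2$ is degenerate but consistent: $r=0$, the empty matrix has full row rank $0$, and every $X$ in the one-edge graph is a forest.

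I expect no serious obstacle. The only points needing care are the precise statement that deleting columns preserves rank exactly when the kernel of the restricted row map is trivial, and the field-independence noted above.
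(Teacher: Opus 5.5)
Your proposal is correct and follows essentially the same route as the paper: you reduce full row rank of $B_t[E(K_t)\setminus X]$ to the absence of a nonzero cycle-space vector supported in $X$, you use the signed circulation of a cycle for one direction and the leaf incidence equation for the other, and you get the square case from the edge count $|X|=t-1$. Your remarks on field-independence and on the degenerate case $t=2$ correspond to the paper's treatment of characteristic $2$ and of the empty matrix.
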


\begin{proof}
The restriction fails to have full row rank precisely when a nonzero linear combination of the rows of $B_t$ vanishes on every column outside $X$. Because the rows form a basis of the cycle space, this is equivalent to the existence of a nonzero cycle-space vector supported in $X$.

If $X$ contains a cycle $C$, orient $C$ cyclically and assign coefficient $1$ or $-1$ to each edge according to whether its fixed orientation agrees with the cyclic orientation. The resulting nonzero vector lies in the kernel of the oriented incidence matrix and is supported on $C\subseteq X$. Hence the restricted rows are dependent. In characteristic $2$, the same construction assigns coefficient $1$ to every edge of $C$; the incidence equation at each vertex still vanishes because exactly two edges of $C$ are incident with that vertex.

Conversely, suppose that a nonzero cycle-space vector $z$ is supported in $X$. If the support graph of $z$ were a forest, it would contain a leaf $v$ incident with a unique support edge $f$. The incidence equation at $v$ would then read $\pm z_f=0$, contradicting $f\in\supp(z)$. Thus the support of every nonzero cycle-space vector contains a cycle, so $X$ cannot be a forest. This proves the first assertion. If $|X|=t-1$, a forest on $t$ vertices has $t-1$ edges exactly when it is a spanning tree; in that case the restricted matrix has $m-(t-1)=r$ columns and is square.
\end{proof}

For a finite set $S\subseteq[n]$, let $V_k(S)$ be the $|S|\times k$ Vandermonde-type matrix whose row indexed by $s\in S$ is $(1,x_s,\ldots,x_s^{k-1})$. With columns ordered first by an edge $a\in E(K_t)$ and then by an exponent $i\in\{0,\ldots,k-1\}$, the intersection matrix of~\cite{GLSTW24} is
\begin{equation}\label{eq:full-intersection-matrix}
 \mathcal M_{k,I}(x)=
 \begin{pmatrix}
 B_t\otimes I_k\\[1mm]
 \diag\bigl(V_k(I_u\cap I_v):\{u,v\}\in E(K_t)\bigr)
 \end{pmatrix}.
\end{equation}
The block order in the lower diagonal is fixed once and for all. Following the terminology of~\cite{GLSTW24}, this generally rectangular variable matrix is called \emph{nonsingular} when it has full column rank over $\FFq(x_1,\ldots,x_n)$, equivalently when it contains a $km\times km$ row-selected minor whose determinant is a nonzero polynomial. We use the standard term \emph{full column rank} below.

To obtain the layer-major form needed below, both rows and columns must be reordered. Index the top rows of $B_t\otimes I_k$ by pairs $(\gamma,i)$, where $\gamma$ is a row of $B_t$, and index its columns by $(a,j)$. Reorder the top rows as $(i,\gamma)$ and all columns as $(j,a)$. The entry in the reordered top block is then $\delta_{ij}(B_t)_{\gamma a}$, so the block becomes $I_k\otimes B_t$. Equivalently, there are perfect-shuffle permutation matrices $P_{\mathrm{row}}$ and $P_{\mathrm{col}}$ satisfying $P_{\mathrm{row}}(B_t\otimes I_k)P_{\mathrm{col}}=I_k\otimes B_t$.
Applying the block row permutation $\diag(P_{\mathrm{row}},I)$ and the column permutation $P_{\mathrm{col}}$ to \eqref{eq:full-intersection-matrix} preserves rank. More precisely, these permutations induce a bijection between square minors of the original and reordered matrices, and corresponding determinants differ only by the appropriate row and column permutation signs. In particular, the existence of a nonzero full-column-size minor is preserved. In the reordered lower block, the row indexed by a pair $a=\{u,v\}$ and an element $s\in I_u\cap I_v$ has entry $x_s^i$ in column $(i,a)$ and zero elsewhere.

Let $G$ be a tree assignment of $H(I_1,\ldots,I_t)$ with exactly $k(t-1)$ labelled graph-edge occurrences. For $f\in E(G)$, let $a(f)\in E(K_t)$ be its unordered endpoint pair and let $s(f)\in[n]$ be the index labelling its hyperedge occurrence. Define a square matrix $M_G(x)$ with $km$ columns indexed by $(i,a)\in\{0,\ldots,k-1\}\times E(K_t)$. Its top $kr$ rows form $I_k\otimes B_t$. For every $f\in E(G)$, let $r_f=(a(f),s(f))$ be the corresponding lower-row index in the reordered intersection matrix, and let $R_f$ denote the row at that index; its entry in column $(i,a)$ is $x_{s(f)}^i$ when $a=a(f)$ and is zero otherwise. The row indices $r_f$ are pairwise distinct: equal labels force the edges into the same simple assigned tree $F_{e_s}$, which contains at most one edge with any prescribed endpoint pair, while different labels give different second coordinates. The number of selected bottom-row occurrences is therefore $k(t-1)$, and the total number of rows is $kr+k(t-1)=km$. Thus $M_G(x)$ is a square row-selected submatrix of a row-and-column permutation of $\mathcal M_{k,I}(x)$.

For a labelled subgraph $Q$ of $G$, write $\underline Q=\{a(f):f\in E(Q)\}\subseteq E(K_t)$ for its underlying simple edge set. If $Q$ is a tree, then no two of its labelled edges have the same endpoint pair, and therefore $|\underline Q|=|E(Q)|$.

\begin{lemma}[Decomposition-indexed generalized-Laplace expansion]\label{lem:det-expansion}
Let $\mathfrak D_k(G)$ denote the set of ordered $k$-tree decompositions of $G$, and let $L_s=\{f\in E(G):s(f)=s\}$ be the $s$th label class. For $\QQ=(Q_0,\ldots,Q_{k-1})\in\mathfrak D_k(G)$, define $\sigma_s(\QQ)=\sum_{i=0}^{k-1}i|E(Q_i)\cap L_s|$ and $x^{\sigma(\QQ)}=\prod_{s=1}^n x_s^{\sigma_s(\QQ)}$. Then, relative to the fixed row and column orders, there is a sign $\epsilon(\QQ)\in\FFq^{\times}$, obtained from an integer sign $\pm1$, such that
\begin{equation}\label{eq:det-expansion}
 \det M_G(x)=\sum_{\QQ\in\mathfrak D_k(G)}c_G(\QQ)x^{\sigma(\QQ)},
 \qquad
 c_G(\QQ)=\epsilon(\QQ)\prod_{i=0}^{k-1}\det B_t[E(K_t)\setminus\underline{Q_i}]\in\FFq^{\times}.
\end{equation}
The sum in \eqref{eq:det-expansion} is indexed by decompositions; equal monomials have not yet been combined. Consequently, for every $\alpha\in\ZZ_{\ge0}^n$, write $x^\alpha=\prod_{s=1}^n x_s^{\alpha_s}$ and let $[x^\alpha]$ denote coefficient extraction; then
\begin{equation}\label{eq:collected-coefficient}
 [x^\alpha]\det M_G(x)=
 \sum_{\substack{\QQ\in\mathfrak D_k(G)\\ \sigma(\QQ)=\alpha}}c_G(\QQ).
\end{equation}
Thus every decomposition-level scalar $c_G(\QQ)$ is nonzero over the ground field, including in characteristic $2$, whereas a collected coefficient in \eqref{eq:collected-coefficient} may vanish. The determinant of the empty $0\times0$ matrix, which occurs when $t=2$, is interpreted as $1$.
\end{lemma}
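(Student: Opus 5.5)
We prove \Cref{lem:det-expansion} by a generalized Laplace expansion of $M_G(x)$ along its bottom $k(t-1)$ rows, using the block structure of the column index set $\{0,\ldots,k-1\}\times E(K_t)$.

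\textbf{Step 1: expand along the bottom rows.} Each bottom row $R_f$ has exactly one nonzero entry in each layer $i$, namely $x_{s(f)}^i$ in column $(i,a(f))$. The Laplace expansion along the full set of bottom rows writes $\det M_G(x)$ as a signed sum over column sets $C$ of size $k(t-1)$. The summand for $C$ is the product of the bottom minor on $C$ and the complementary top minor on the remaining columns. Since the top block is $I_k\otimes B_t$, the complementary minor vanishes unless the complement meets each layer $i$ in exactly $r$ columns. In that case it factors as $\pm\prod_i\det B_t[E(K_t)\setminus X_i]$, where $X_i=\{a:(i,a)\in C\}$ and $|X_i|=t-1$. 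By \Cref{lem:cycle-deletion}, this product is nonzero exactly when every $X_i$ is a spanning tree of $K_t$.

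\textbf{Step 2: expand the bottom minor and build the decomposition.} Expand the bottom minor on $C$ as a signed sum over bijections $\pi$ from $E(G)$ to $C$. The term for $\pi$ is nonzero only if $\pi(f)=(i_f,a(f))$ for every $f$, and it then equals $\pm\prod_f x_{s(f)}^{i_f}$. Setting $Q_i=\{f:i_f=i\}$, the column set determines $\underline{Q_i}=X_i$. Because $\pi$ is a bijection, the map $f\mapsto a(f)$ restricted to $Q_i$ is injective onto $X_i$. So $Q_i$ is a labelled spanning tree of $G$ with no parallel pair, and $\QQ=(Q_0,\ldots,Q_{k-1})$ lies in $\mathfrak D_k(G)$.

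\textbf{Step 3: show the correspondence is a bijection.} Conversely, each $\QQ\in\mathfrak D_k(G)$ determines $C$ and $\pi$ uniquely. Hence the surviving pairs $(C,\pi)$ correspond bijectively to decompositions, with no two terms indexed by the same $\QQ$. The monomial of the term for $\QQ$ is $\prod_f x_{s(f)}^{i_f}=x^{\sigma(\QQ)}$. Its scalar is a product of integer signs (the Laplace sign and the permutation sign of $\pi$) times $\prod_i\det B_t[E(K_t)\setminus\underline{Q_i}]$, which gives \eqref{eq:det-expansion}.

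\textbf{Step 4: nonvanishing in characteristic $2$ and the collected coefficients.} The main subtlety is that $c_G(\QQ)$ must be nonzero in every characteristic. \Cref{lem:cycle-deletion} already gives nonsingularity over $\FFq$ in every characteristic, including characteristic $2$, so each factor $\det B_t[E(K_t)\setminus\underline{Q_i}]$ is a nonzero element of $\FFq$. The sign $\epsilon(\QQ)$ is the image in $\FFq$ of an integer $\pm1$, so it is a unit. Collecting the terms with equal $\sigma(\QQ)$ then gives \eqref{eq:collected-coefficient}. When $t=2$ the top block is empty, and the convention that the empty determinant equals $1$ handles this case.

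The expected obstacle is the careful bookkeeping in Step 1: one must check that the complementary top minor of $I_k\otimes B_t$ vanishes unless the complement meets each layer in exactly $r$ columns, and that it then factors into the per-layer minors of $B_t$. This follows from the block-diagonal form, since a square minor of a block-diagonal matrix with unequal per-block dimensions has a zero row block or a zero column block.
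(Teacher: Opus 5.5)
Your proposal is correct and follows the paper's proof. Both arguments use a generalized Laplace expansion along the selected bottom rows, a Leibniz expansion of the bottom minor that reads off $(Q_0,\ldots,Q_{k-1})$ from the chosen columns $(i_f,a(f))$, and the block-diagonal structure of $I_k\otimes B_t$ together with \Cref{lem:cycle-deletion}, which forces spanning trees and gives nonvanishing in every characteristic.

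The only difference is the order of one step. You first force each layer's complement to have exactly $r$ columns and then apply the square case of the lemma. The paper instead uses rank additivity of the blocks and the forest criterion, then the edge count $k(t-1)$, to force $|E(Q_i)|=t-1$. Your stated reason for the balance claim ("a zero row block or a zero column block") is loosely phrased. The precise reason is that a layer with fewer than $r$ complementary columns has its $r$ rows supported on fewer than $r$ columns, so those rows are dependent.
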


\begin{proof}
Let $\mathcal R=\{r_f:f\in E(G)\}$ be the selected bottom-row set and put $p=|\mathcal R|=k(t-1)$. Apply the generalized Laplace formula along the rows in $\mathcal R$. For each $p$-element column set $C$, the corresponding term is, up to the Laplace sign, $\det M_G[\mathcal R,C]\det M_G[\mathcal R^c,C^c]$. Expand only the bottom determinant by the Leibniz formula and leave the complementary top determinant unexpanded.

A nonzero bottom Leibniz term chooses, for every $f\in E(G)$, one column in the support of $R_f$. Since $R_f$ is supported precisely in the columns $(i,a(f))$, the choice has the form $(\lambda(f),a(f))$ for a unique map $\lambda:E(G)\to\{0,\ldots,k-1\}$. The chosen columns are distinct if and only if no two edges with the same endpoint pair receive the same value of $\lambda$. Conversely, every map $\lambda$ satisfying this condition determines a unique selected column set $C$ and a unique bottom Leibniz term. For $0\le i\le k-1$, let $Q_i=\lambda^{-1}(i)$. Then the $Q_i$ partition $E(G)$ and $|E(Q_i)|=|\underline{Q_i}|$ for every $i$.

Deleting the selected columns from the top block $I_k\otimes B_t$ leaves a block-diagonal matrix whose $i$th block is $B_t[E(K_t)\setminus\underline{Q_i}]$. The complementary top matrix has $kr$ rows and $kr$ columns. If its determinant is nonzero, then its rank is $kr$; because the rank of a block-diagonal matrix is the sum of the ranks of its blocks, every block must have full row rank $r$. By \Cref{lem:cycle-deletion}, each $\underline{Q_i}$ is a forest. Hence $|E(Q_i)|=|\underline{Q_i}|\le t-1$. Since the $Q_i$ partition the $k(t-1)$ edges of $G$, equality must hold for every $i$. Thus each $\underline{Q_i}$ is a forest with $t-1$ edges on $t$ vertices and therefore a spanning tree. Because the edges in $Q_i$ have distinct endpoint pairs, $Q_i$ itself is a spanning tree of the labelled multigraph. Hence every nonzero decomposition-level Laplace term determines an element of $\mathfrak D_k(G)$.

Conversely, fix $\QQ=(Q_0,\ldots,Q_{k-1})\in\mathfrak D_k(G)$. For each $f\in Q_i$, select from $R_f$ the entry in column $(i,a(f))$. A spanning tree contains no parallel pair, so the selected columns are distinct within each layer; the fact that the $Q_i$ partition $E(G)$ makes this a valid bottom transversal. It is uniquely determined by $\QQ$. By \Cref{lem:cycle-deletion}, every complementary block $B_t[E(K_t)\setminus\underline{Q_i}]$ is square and nonsingular. Therefore the associated decomposition-level term is nonzero.

The selected bottom entries contribute $\prod_{i=0}^{k-1}\prod_{f\in E(Q_i)}x_{s(f)}^i=x^{\sigma(\QQ)}$. The product of the Laplace sign, the bottom permutation sign, and the complementary block determinants is precisely $c_G(\QQ)$. This proves \eqref{eq:det-expansion} and the one-to-one correspondence between ordered decompositions and decomposition-level terms. Collecting terms with the same exponent vector gives \eqref{eq:collected-coefficient}.
\end{proof}

\begin{remark}[Cancellation inside a signature fiber]\label{rem:coefficient-cancellation}
The distinction between a decomposition-level scalar and a collected polynomial coefficient is essential. Take $t=3$ and $k=2$, and let two distinct labels $a$ and $b$ both have support $\{1,2,3\}$. Assign to each label the path with edge set $\{\{1,2\},\{2,3\}\}$. With $B_3=(1,-1,1)$, columns ordered as $(0,12),(0,13),(0,23),(1,12),(1,13),(1,23)$, and selected bottom rows ordered as $a12,a23,b12,b23$, the matrix is
\begin{equation}\label{eq:cancellation-example}
 M_G(x)=
 \begin{pmatrix}
 1&-1&1&0&0&0\\
 0&0&0&1&-1&1\\
 1&0&0&x_a&0&0\\
 0&0&1&0&0&x_a\\
 1&0&0&x_b&0&0\\
 0&0&1&0&0&x_b
 \end{pmatrix},
 \qquad \det M_G(x)=(x_a-x_b)^2.
\end{equation}
Indeed, subtracting the third and fourth rows from the fifth and sixth rows, respectively, and then expanding along the last two rows gives the displayed determinant. There are four ordered two-tree decompositions. Two keep each label class intact and contribute the monomials $x_a^2$ and $x_b^2$. The other two split both label classes between the layers; they have the same signature and both contribute the monomial $x_ax_b$. Their nonzero decomposition-level scalars sum to the collected coefficient $-2$, which vanishes in characteristic $2$. Thus \eqref{eq:det-expansion} must be read before like monomials are combined, and \eqref{eq:collected-coefficient} is the correct statement about polynomial coefficients.
\end{remark}

\begin{theorem}[Critical form implies intersection-matrix full column rank]\label{thm:critical-implies-matrix}
Assume the critical form stated in \Cref{conj:critical}. Let $t\ge2$, $k\ge1$, let $\FFq$ be any field, and let $I_1,\ldots,I_t\subseteq[n]$ satisfy $\wt(I_J)\le k(|J|-1)$ for every nonempty proper subset $J\subsetneq[t]$ and $\wt(I_{[t]})=k(t-1)$. Then $\mathcal M_{k,I}(x)$ has full column rank over $\FFq(x_1,\ldots,x_n)$. In particular, the intersection-matrix conjecture, Conjecture~1.5 of~\cite{GLSTW24}, follows from the critical hypergraph conjecture.
\end{theorem}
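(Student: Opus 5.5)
The plan is to pass from the weight hypotheses to the hypergraph side, invoke \Cref{conj:critical}, and then read off a nonzero minor from the expansion in \Cref{lem:det-expansion}.

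First I set $H=H(I_1,\ldots,I_t)$. \Cref{lem:weight-partition}, with the total-weight inequality holding as an equality, shows that $H$ is $k$-WPC and $k$-critical. If some occurrences of the support hypergraph were singletons they would already have been discarded by the definition of $E_I$, so no reduction is needed. \Cref{conj:critical} then supplies a tree assignment $G$ of $H$ together with an ordered $k$-tree decomposition $\TT$ whose signature fiber is $\{\TT\}$. By \Cref{lem:counts}, $G$ has exactly $k(t-1)$ labelled edges. Hence the square matrix $M_G(x)$ is defined, and it is a row-selected $km\times km$ submatrix of the row-and-column permuted intersection matrix.

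Next I apply \eqref{eq:collected-coefficient} with $\alpha=\sigma(\TT)$. Under the identification of a hyperedge occurrence $\varepsilon_s$ with its index $s$, the signature $\sig_{\TT}$ coincides with the exponent vector $\sigma(\TT)$. Indices $s$ not labelling any occurrence have exponent $0$ in every term. Thus the fiber $\{\QQ\in\mathfrak D_k(G):\sigma(\QQ)=\sigma(\TT)\}$ equals $\{\TT\}$, and the collected coefficient is the single scalar $c_G(\TT)$. This scalar is nonzero in $\FFq$ in every characteristic by \Cref{lem:det-expansion}. Therefore $\det M_G(x)$ is a nonzero polynomial.

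Finally I transfer this back to $\mathcal M_{k,I}(x)$. The perfect-shuffle permutations change minors only by sign, so $\mathcal M_{k,I}(x)$ has a nonzero $km\times km$ minor. Its number of columns is $km$, so it has full column rank over $\FFq(x_1,\ldots,x_n)$. The case $t=2$ is covered by the empty-determinant convention. The consequence for Conjecture~1.5 of~\cite{GLSTW24} follows because that conjecture's hypotheses are exactly these weight conditions.

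The main point needing care is the identification of $\sig_{\TT}$ with $\sigma(\TT)$. Coordinates are indexed by occurrences on one side and by $[n]$ on the other, so I must check that distinct occurrences carry distinct indices $s$; this holds by construction of $E_I$. With that, signature uniqueness is exactly the statement that no other decomposition shares the monomial, which prevents the cancellation illustrated in \Cref{rem:coefficient-cancellation}.
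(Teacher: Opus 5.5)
Your proposal is correct and matches the paper's proof essentially step for step. You use \Cref{lem:weight-partition} to make the support hypergraph critical and $k$-WPC, and the assumed \Cref{conj:critical} to get a signature-unique decomposition of a full assignment with $k(t-1)$ rows. You then identify signatures with exponent vectors, with empty label classes contributing zero exponents, so that the collected coefficient of $x^{\sigma(\TT)}$ in \eqref{eq:collected-coefficient} is the single nonzero scalar $c_G(\TT)$. Finally, you carry the resulting nonzero $km\times km$ minor back through the row and column permutations to get full column rank of $\mathcal M_{k,I}(x)$.
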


\begin{proof}
By \Cref{lem:weight-partition}, the support hypergraph $H=H(I_1,\ldots,I_t)$ is $k$-critical and $k$-WPC\@. By the assumed critical conjecture, choose a tree assignment $G$ with a signature-unique ordered decomposition $\TT=(T_0,\ldots,T_{k-1})$. By \Cref{lem:counts}, $G$ has exactly $k(t-1)$ labelled edge occurrences, so the square row-selected minor $M_G(x)$ is defined over $\FFq[x_1,\ldots,x_n]$.

For an ordered decomposition $\QQ$, the exponent $\sigma_s(\QQ)$ from \Cref{lem:det-expansion} is the signature coordinate of the occurrence labelled by $s$ whenever $|e_s|\ge2$; when $|e_s|\le1$, the class $L_s$ is empty and $\sigma_s(\QQ)=0$. Since $x_1,\ldots,x_n$ are algebraically independent, two monomials $x^{\sigma(\QQ)}$ are equal exactly when the corresponding signatures agree on every retained occurrence. The signature fiber of $\TT$ is a singleton, so the coefficient formula \eqref{eq:collected-coefficient} gives $[x^{\sigma(\TT)}]\det M_G(x)=c_G(\TT)\ne0$. Therefore $\det M_G(x)$ is a nonzero polynomial.

The row-and-column reordered intersection matrix contains $M_G(x)$ as a square row-selected submatrix, so it has full column rank. Row and column permutations preserve rank, and therefore the original matrix \eqref{eq:full-intersection-matrix} has full column rank as required.
\end{proof}

\begin{proposition}[Criticality and the selected full-assignment row set]\label{prop:square}
For a support hypergraph $H=H(I_1,\ldots,I_t)$, the following are equivalent:
\begin{enumerate}[label=\textup{(\roman*)},leftmargin=2.2em]
\item $\rho(H)=k(t-1)$;
\item $\wt(I_{[t]})=k(t-1)$;
\item every full tree assignment $G$ of $H$ has exactly $k(t-1)$ labelled graph-edge occurrences and therefore yields, by the construction above, a square selected matrix $M_G(x)$.
\end{enumerate}
\end{proposition}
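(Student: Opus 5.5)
The plan is to prove the cycle of implications (i)$\Leftrightarrow$(ii) and (i)$\Leftrightarrow$(iii), using only the counting identities already established. No matroid or determinant argument is needed: every step is an edge count.

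For (i)$\Leftrightarrow$(ii), I would invoke the identity $\wt(I_{[t]})=\rho(H(I_1,\ldots,I_t))$, recorded right after \eqref{eq:wt-def}. To justify it, take $J=[t]$ in \eqref{eq:wt-def}. This gives $\wt(I_{[t]})=\sum_{s}\max\{0,|e_s|-1\}$. Indices with $|e_s|\le1$ contribute zero and are exactly those not retained in $E_I$. Each retained occurrence $\varepsilon_s$ contributes $|e_s|-1$. Hence the sum equals $\rho(H)$, and the two equalities (i) and (ii) are the same statement.

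For (i)$\Rightarrow$(iii), let $G$ be any full tree assignment of $H$. \Cref{lem:counts} gives $|E(G)|=\rho(H)=k(t-1)$. The discussion preceding \Cref{lem:det-expansion} shows that the row indices $r_f=(a(f),s(f))$ are pairwise distinct. The reason is that equal labels place edges in a single simple tree $F_{e_s}$, which has distinct endpoint pairs, while different labels give different second coordinates. Selecting these $k(t-1)$ bottom rows together with the $kr$ rows of $I_k\otimes B_t$ gives $kr+k(t-1)=k\bigl(r+t-1\bigr)=km$ rows against $km$ columns. This is the square matrix $M_G(x)$.

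For (iii)$\Rightarrow$(i), first check that $H$ has at least one full tree assignment. Every occurrence has at least two vertices, and any vertex set admits a spanning tree, for instance a star, so an assignment exists. Applying (iii) to that assignment together with \Cref{lem:counts} gives $\rho(H)=|E(G)|=k(t-1)$.

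The only delicate point, and the step I expect to be the main obstacle, is the degenerate case. If $H$ has no retained occurrences, the empty assignment is the unique one and $\rho(H)=0$. Then (i) and (iii) both hold exactly when $k(t-1)=0$, which cannot happen since $t\ge2$ and $k\ge1$. The equivalence therefore still holds, with the nonexistence of assignments never arising. I would also note explicitly that "square" in (iii) means the row count $km$, so that (iii) is genuinely a statement about $|E(G)|$.
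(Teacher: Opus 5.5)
Your proposal is correct and follows the paper's proof: you get (i)$\Leftrightarrow$(ii) from the identity $\wt(I_{[t]})=\rho(H)$, and (i)$\Leftrightarrow$(iii) from \Cref{lem:counts} together with the one-bottom-row-per-edge construction of $M_G(x)$. Your explicit check that some tree assignment always exists, so that (iii) is not vacuous, is a point the paper leaves implicit. The separate treatment of the case with no retained occurrences is unnecessary, since the empty assignment already falls under that general existence check.
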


\begin{proof}
The equivalence of (i) and (ii) is the identity $\rho(H)=\wt(I_{[t]})$. By \Cref{lem:counts}, every full tree assignment has exactly $\rho(H)$ labelled graph-edge occurrences, and the construction of $M_G(x)$ retains one bottom row for each such occurrence. Thus (i) and (iii) are equivalent.
\end{proof}

\begin{remark}[What is and is not equivalent]\label{rem:matrix-strength}
By \Cref{prop:critical-weight-equivalence}, the critical $k$-WPC input conditions are exactly the canonical support-hypergraph form of the intersection weight inequalities. The conclusion of \Cref{conj:critical}, however, is a sufficient structural certificate for matrix full column rank; no converse from matrix full column rank to the existence of a signature-singleton full tree assignment is established here. A nonzero determinant may, in principle, arise from several decomposition-level terms in the same signature fiber whose collected coefficient is nonzero.
\end{remark}

\begin{remark}[Overfull support hypergraphs]\label{rem:literal-matrix}
If the total condition were replaced by $\wt(I_{[t]})>k(t-1)$ while the proper-subset bounds were retained, the calculation in \Cref{lem:weight-partition} would still yield $k$-WPC\@. A full tree assignment would then supply more than $k(t-1)$ bottom rows. One could seek a square row-selected minor by deleting assigned graph-edge rows, but that is an additional pruning problem rather than a full-assignment implication.
\end{remark}

\section{Layer-contained certificates and explicit families}\label{sec:certificates}

The proof of \Cref{thm:layer-positive} uses two distinct consequences of layer containment: a quotient-rank bound in each layer and a layer-by-layer capacity argument for signature uniqueness.

\begin{lemma}[Layer quotient rank]\label{lem:layer-rank}
Let $T$ be a spanning tree of $V$, let $\PP$ be a partition of $V$ with $s=|\PP|$, and suppose that $E(T)$ is partitioned into classes $A_e$ indexed by a family $\mathcal L$ of hyperedges, with both endpoints of every edge in $A_e$ belonging to $e$. Then $s-1\le\sum_{e\in\mathcal L}(p_{\PP}(e)-1)$.
\end{lemma}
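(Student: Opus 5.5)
The plan is to count edges of $T$ crossing $\PP$ in two ways, namely globally and class by class. The claimed bound has no $k$ in it, so the argument uses only that $T$ is a spanning tree and that each class is confined to its hyperedge.

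First, the lower bound. Since $T$ is a spanning tree, contracting each block of $\PP$ and deleting loops leaves a connected multigraph on $s$ vertices. This is the quotient argument of \Cref{lem:tree-cross-lower} applied to $T$ with $e=V$. It gives $\cross_T(\PP)\ge s-1$. Because the classes $A_e$ partition $E(T)$, we also have $\cross_T(\PP)=\sum_{e\in\mathcal L}\cross_{A_e}(\PP)$, where $\cross_{A_e}(\PP)$ counts the edges of $A_e$ whose endpoints lie in different blocks.

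Second, the upper bound for each class. Fix $e\in\mathcal L$. Let $Y_e$ be the set of edges of $A_e$ that cross $\PP$. Each such edge has both endpoints in $e$, so after identifying vertices in the same block, $Y_e$ becomes a set of non-loop edges on the $p_{\PP}(e)$ blocks that meet $e$. The key claim is that this image is acyclic, i.e.\ the quotient of $T$ by $\PP$ restricted to the image of $Y_e$ contains no cycle. If so, $|Y_e|\le p_{\PP}(e)-1$, and summing over $e$ gives the inequality.

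The main obstacle is that the global quotient $T/\PP$ need not be a forest. Blocks of $\PP$ need not induce connected subtrees, so a cycle in the quotient can come from a tree path that leaves a block and re-enters it. Acyclicity of the image of $Y_e$ is therefore not automatic, and I expect a direct count to fail unless the crossing edges are chosen carefully.

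I would handle this by passing to a spanning forest of the quotient. Take a spanning tree $Z$ of the connected quotient $T/\PP$; it has exactly $s-1$ edges, all of them images of crossing edges of $T$. Split $Z$ by class, setting $Z_e=Z\cap(\text{image of }A_e)$. Each $Z_e$ is acyclic, being a subset of a tree, and all its edges join blocks met by $e$. Hence $Z_e$ is a forest on at most $p_{\PP}(e)$ quotient vertices, so $|Z_e|\le p_{\PP}(e)-1$. Summing gives $s-1=|Z|=\sum_e|Z_e|\le\sum_e(p_{\PP}(e)-1)$, which is the claim. This replaces the per-class acyclicity claim with a matroid-rank argument: the rank of the whole quotient is at most the sum of the ranks of its class restrictions, and each class rank is bounded by $p_{\PP}(e)-1$. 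The one point to check is that empty classes and classes with no crossing edges contribute $0\le p_{\PP}(e)-1$, which holds since $e$ is nonempty.
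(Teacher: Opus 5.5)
Your proof is correct, and once you set aside the per-class crossing count (which does fail, e.g.\ for the path $a$--$b$--$c$ as a single class with $a,c$ in one block), it is essentially the paper's argument. Both proofs contract $\PP$ in $T$, use that the connected quotient has graphic rank $s-1$, split it by classes, and bound each class by $p_{\PP}(e)-1$; the paper invokes rank subadditivity $r(X\cup Y)\le r(X)+r(Y)$ directly, while you make that step concrete by choosing a spanning tree $Z$ of the quotient and partitioning its edges by class.
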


\begin{proof}
Contract every block of $\PP$ in $T$ and delete loops. The resulting quotient multigraph $Q$ is connected on the $s$ blocks, so its graphic-matroid rank is $r_Q(E(Q))=s-1$. For each $e\in\mathcal L$, let $Q_e$ be the set of nonloop quotient edges arising from $A_e$. The sets $Q_e$ partition $E(Q)$. Repeated use of the matroid rank inequality $r(X\cup Y)\le r(X)+r(Y)$ gives $s-1=r_Q(E(Q))\le\sum_{e\in\mathcal L}r_Q(Q_e)$. Every edge of $Q_e$ has both endpoints among the blocks meeting $e$, so $Q_e$ is supported on at most $p_{\PP}(e)$ quotient vertices. Its graphic rank is therefore at most $p_{\PP}(e)-1$. Combining these inequalities yields the asserted bound.
\end{proof}

\begin{proof}[Proof of \Cref{thm:layer-positive}]
Let $G=\biguplus_{e}F_e$, $\TT=(T_0,\ldots,T_{k-1})$, and $\ell$ be a layer-contained certificate. Because $G$ has a full ordered $k$-tree decomposition, \Cref{lem:counts} gives $\rho(H)=|E(G)|=k(t-1)$; hence $H$ is $k$-critical.

For $0\le i\le k-1$, let $E_i=\ell^{-1}(i)$. The sets $E_i$ partition $E(H)$. Layer containment gives $\dot\bigcup_{e\in E_i}E(F_e)\subseteq E(T_i)$. Conversely, if an edge occurrence $f\in E(T_i)$ has label $e$, then $f\in E(F_e)\subseteq E(T_{\ell(e)})$; because the layers form an edge partition, necessarily $\ell(e)=i$. Therefore $E(T_i)=\dot\bigcup_{e\in E_i}E(F_e)$. Fix a partition $\PP$ of $V$ with $s$ blocks. Apply \Cref{lem:layer-rank} to $T_i$, with classes $A_e=E(F_e)$ for $e\in E_i$. For every $i$ this gives $s-1\le\sum_{e\in E_i}(p_{\PP}(e)-1)$. Summing over all layers and using that the $E_i$ partition $E(H)$ yields $k(s-1)\le\sum_{i=0}^{k-1}\sum_{e\in E_i}(p_{\PP}(e)-1)=w_H(\PP)$. Thus $H$ is $k$-WPC\@.

It remains to prove signature uniqueness. For each $i$, put $A_i=\dot\bigcup_{e\in E_i}E(F_e)=E(T_i)$, so $|A_i|=t-1$. Let $\TT'=(T'_0,\ldots,T'_{k-1})$ be an ordered $k$-tree decomposition of $G$ satisfying $\sig_{\TT'}=\sig_{\TT}$. We prove by induction on $c$ that $T'_c=T_c$.

For $c=0$, every label $e\in E_0$ has target signature $0$. All layer indices are nonnegative, so equality of signatures implies that every edge of $F_e$ lies in $T'_0$. Hence $A_0\subseteq E(T'_0)$. Both sets have $t-1$ edges, and therefore $E(T'_0)=A_0=E(T_0)$. This also covers $k=1$.

Now fix $1\le c\le k-1$ and assume that $T'_j=T_j$ for all $j<c$. Let $e\in E_c$ and set $n_e=|E(F_e)|=|e|-1$. Because $E(F_e)\subseteq A_c$ and the certified layers are edge-disjoint, the induction hypothesis implies that no edge of $F_e$ lies in any $T'_j$ with $j<c$. Define $a_i(e)=|E(F_e)\cap E(T'_i)|$. Then $a_i(e)=0$ for $i<c$ and $\sum_{i=c}^{k-1}a_i(e)=n_e$. Under the certified decomposition, every edge of $F_e$ lies in layer $c$, so $\sig_{\TT}(e)=cn_e$. Equality of signatures gives
\begin{equation}\label{eq:induction-signature}
 0=\sum_{i=c}^{k-1}(i-c)a_i(e)=\sum_{i=c+1}^{k-1}(i-c)a_i(e).
\end{equation}
Every summand in the final expression is nonnegative. Hence $a_i(e)=0$ for all $i>c$, and consequently $a_c(e)=n_e$. Thus $E(F_e)\subseteq E(T'_c)$ for every $e\in E_c$, so $A_c\subseteq E(T'_c)$. Again both sets have $t-1$ edges; therefore $E(T'_c)=A_c=E(T_c)$. The induction is complete, and $\TT'=\TT$.
\end{proof}

\begin{remark}[Why no interior rank restriction is needed]\label{rem:no-interior-restriction}
For a single label with $n\ge2$ edges, the equation $\sum_i ia_i=cn$ does not determine the vector $(a_0,\ldots,a_{k-1})$ when $c$ is an interior index. That local ambiguity does not produce a second global decomposition. In the proof above, layers $0,\ldots,c-1$ are first fixed and filled to capacity; the signature equality then prevents the label from moving to any higher layer, and the capacity $|E(T'_c)|=t-1$ fixes the entire layer. Thus hyperedges of arbitrary rank may occur in every layer.
\end{remark}

\subsection{Stars and basic certified blocks}

For $c\in e$, let $S_e(c)$ denote the star on $e$ centered at $c$, with edge set $\{\{c,u\}:u\in e\setminus\{c\}\}$. Stars give convenient explicit assignments, although \Cref{thm:layer-positive} uses only layer containment.

\begin{proposition}[Critical full-edge bundles]\label{prop:critical-full-bundle}
Let $H$ consist of exactly $k$ labelled copies $e_0,\ldots,e_{k-1}$ of the full hyperedge $V$, where $|V|=t\ge2$. For each $i$, choose an arbitrary spanning tree $F_{e_i}$ on $V$ and set $T_i=F_{e_i}$. Then $(G,(T_0,\ldots,T_{k-1}),\ell)$, with $\ell(e_i)=i$, is a layer-contained certificate. Consequently, $H$ is $k$-critical, $k$-WPC, and has a $k$-distinguishable tree assignment.
\end{proposition}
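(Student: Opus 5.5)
The plan is to verify directly that the displayed triple satisfies \Cref{def:layer-contained}, and then read off the three consequences from \Cref{thm:layer-positive}.

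First, the tree assignment. For each label $e_i$, whose support is $V$, the chosen $F_{e_i}$ is a simple spanning tree on $V$ with every edge labelled $e_i$. So $G=\biguplus_i F_{e_i}$ is a valid tree assignment. Parallel edges can occur in $G$ only between distinct label classes, which is permitted.

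Second, the decomposition. Setting $T_i=F_{e_i}$ gives $k$ spanning trees of $V$, each containing no parallel pair because each $F_{e_i}$ is simple. Since the label classes are disjoint as edge occurrences and their union is $E(G)$, the sets $E(T_0),\ldots,E(T_{k-1})$ form an ordered partition of $E(G)$. Hence $\TT=(T_0,\ldots,T_{k-1})$ is an ordered $k$-tree decomposition. As a consistency check, \Cref{lem:counts} gives $|E(G)|=\rho(H)=k(t-1)$.

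Third, the containment condition. With $\ell(e_i)=i$ we have $E(F_{e_i})=E(T_i)=E(T_{\ell(e_i)})$, so the condition holds with equality. This completes the certificate.

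Finally, \Cref{thm:layer-positive} applied to this certificate yields that $H$ is $k$-critical and $k$-WPC, and that $G$ is $k$-distinguishable. Criticality can also be confirmed independently: $\rho(H)=k(t-1)$. The $k$-WPC property is likewise visible directly from the computation in the proof of \Cref{thm:full-edge-counterexamples} with $q=0$, which gives $w_H(\PP)=k(|\PP|-1)$.

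There is no real obstacle. The only point needing care is the bookkeeping distinction between edge occurrences and their endpoint pairs, so that parallel edges from different copies do not break the partition or tree conditions.
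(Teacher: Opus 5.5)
Your proposal is correct and follows the paper's proof essentially step for step. You check that the disjoint label classes $E(F_{e_i})$ partition $E(G)$ into spanning trees, note that containment holds with equality under $\ell(e_i)=i$, and invoke \Cref{thm:layer-positive}. Like the paper, you also confirm $\rho(H)=k(t-1)$ and $w_H(\PP)=k(|\PP|-1)$ directly.
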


\begin{proof}
The assigned trees are disjoint as labelled edge occurrences, even when their underlying endpoint pairs coincide, and their ordered union is a full $k$-tree decomposition. Each label class is contained in its prescribed layer, so \Cref{thm:layer-positive} applies. Directly, every partition with $s$ blocks has $w_H(\PP)=k(s-1)$ and $\rho(H)=k(t-1)$.
\end{proof}

\begin{example}[The rank-two specialization]\label{ex:rank-two}
Suppose that $H$ is an ordinary labelled multigraph with exactly $k(t-1)$ edges and satisfies the Nash--Williams--Tutte partition inequalities. Its tree assignment is forced: each rank-two hyperedge is replaced by its unique graph edge. The Nash--Williams--Tutte theorem gives $k$ edge-disjoint spanning trees, and the edge count forces them to exhaust $E(H)$. Since each rank-two hyperedge occurrence supplies its own label and its label class consists of a single edge, the corresponding signature coordinate is exactly its layer index. Hence every ordered tree decomposition is signature-unique. Thus the critical hypergraph conjecture reduces to ordinary tree packing in rank two.
\end{example}

For $k\ge3$ and $t\ge3$, \Cref{prop:critical-full-bundle} places a rank-$t$ label in every interior layer. It therefore gives an explicit family showing that an interior rank-two condition is unnecessary for layer-contained certificates.

\begin{definition}[Saturated two-sided star block]\label{def:saturated-block}
Fix $t\ge2$ and $k\ge2$, write $V=\{r,u_1,\ldots,u_{t-1}\}$, and choose centers $c_-,c_+\in V$. A saturated two-sided star block consists of two distinct labelled occurrences $e^-,e^+$ of the full hyperedge $V$, assigned respectively to the stars $S_V(c_-)$ and $S_V(c_+)$, together with $k-2$ ordinary labelled spanning trees $R_1,\ldots,R_{k-2}$, every edge of each $R_i$ being represented by its own rank-two hyperedge. The prescribed layers are $T_0=S_V(c_-)$, $T_i=R_i$ for $1\le i\le k-2$, and $T_{k-1}=S_V(c_+)$; the middle list is empty when $k=2$.
\end{definition}

\begin{proposition}[Saturated star blocks are certified]\label{prop:saturated-certified}
Every saturated two-sided star block has a layer-contained certificate and hence is $k$-critical, $k$-WPC, and admits a $k$-distinguishable tree assignment.
\end{proposition}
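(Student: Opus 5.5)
The plan is to exhibit the certificate prescribed in \Cref{def:saturated-block} and then invoke \Cref{thm:layer-positive}. Take the tree assignment $G$ with $F_{e^-}=S_V(c_-)$, $F_{e^+}=S_V(c_+)$, and, for every edge $g$ of every $R_i$, the rank-two occurrence $\varepsilon_g$ with $F_{\varepsilon_g}$ the single edge $g$. Set $\TT=(T_0,\ldots,T_{k-1})$ as in the definition. The layer map is $\ell(e^-)=0$, $\ell(e^+)=k-1$, and $\ell(\varepsilon_g)=i$ whenever $g\in E(R_i)$.

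The verification proceeds in three steps.

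\begin{enumerate}[label=\textup{(\arabic*)},leftmargin=2.2em]
\item Each $F_e$ is a simple tree on its support. A star on $V$ with any center is a spanning tree, and a single edge is a tree on its two-element support. Hence $G$ is a tree assignment.
\item $\TT$ is an ordered $k$-tree decomposition of $G$. Every edge occurrence of $G$ lies in exactly one layer, since label classes are disjoint as occurrences even when endpoint pairs coincide (as in \Cref{prop:critical-full-bundle}). Each layer is a spanning tree of $V$: two stars and $k-2$ spanning trees $R_i$. The layers partition $E(G)$, giving $k(t-1)$ edges in total.
\item Layer containment $E(F_e)\subseteq E(T_{\ell(e)})$ holds directly from the definition of $\ell$.
\end{enumerate}

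The one point needing care is that $e^-$ and $e^+$ are distinct occurrences, and the rank-two occurrences are also distinct from each other and from these. So even if $c_-=c_+$, or some $R_i$ shares edges with a star, the edge occurrences remain disjoint. For $k=2$ the middle list is empty, and the argument is unchanged. When $t=2$, both stars are the single edge, which is consistent.

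\Cref{thm:layer-positive} then yields that the block is $k$-critical and $k$-WPC, and that $\TT$ is signature-unique, so $G$ is $k$-distinguishable. There is no real obstacle here; the only step to state carefully is the occurrence-level disjointness in step (2).
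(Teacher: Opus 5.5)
Your proposal is correct and matches the paper's proof: you assign $e^-$ to layer $0$, $e^+$ to layer $k-1$, and every rank-two label of $R_i$ to layer $i$, check that each layer is a spanning tree containing its label classes, and then invoke \Cref{thm:layer-positive}. Your remark on occurrence-level disjointness, covering $c_-=c_+$ and parallel endpoint pairs between some $R_i$ and a star, spells out a detail the paper leaves implicit.
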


\begin{proof}
Each prescribed layer is a spanning tree. Assign $e^-$ to layer $0$, assign $e^+$ to layer $k-1$, and assign every rank-two label belonging to $R_i$ to layer $i$. Each label class is contained in its prescribed layer. Hence the data form a layer-contained certificate. The excess is $2(t-1)+(k-2)(t-1)=k(t-1)$, in agreement with \Cref{thm:layer-positive}.
\end{proof}

\begin{example}[A small genuinely hypergraphic certificate]\label{ex:nongraphic}
Let $k=2$ and $V=\{r,a,b\}$. Take one triple edge $e=\{r,a,b\}$ and two rank-two edges $f_a=\{r,a\}$ and $f_b=\{r,b\}$. Assign $F_e=\{ra,rb\}$ and place it in $T_0$, while the unique assigned edges of $f_a$ and $f_b$ form $T_1$. Both layers are spanning trees, and every label class is layer-contained. Thus \Cref{thm:layer-positive} applies. Directly, $\rho(H)=2+1+1=4=2(|V|-1)$. The discrete partition has weak excess $4$; the bipartition $\{r\mid ab\}$ has weak excess $3$; and each of the other two nontrivial bipartitions has weak excess $2$.
\end{example}

\subsection{One-vertex sums}

Let $H^{(1)}$ and $H^{(2)}$ have vertex sets $V_1$ and $V_2$ with $V_1\cap V_2=\{r\}$ and disjoint hyperedge-occurrence sets. Their one-vertex sum has vertex set $V_1\cup V_2$ and the disjoint union of the hyperedge occurrences.

\begin{lemma}[Trees glue at one vertex]\label{lem:tree-sum}
If $T^{(1)}$ and $T^{(2)}$ are spanning trees on $V_1$ and $V_2$, respectively, and $V_1\cap V_2=\{r\}$, then $T^{(1)}\cup T^{(2)}$ is a spanning tree on $V_1\cup V_2$.
\end{lemma}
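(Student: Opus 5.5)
The plan is to verify directly the two defining properties of a spanning tree, connectivity and acyclicity, or to check connectivity together with the edge count. Both arguments are elementary, and I would give the edge-count version because it matches the counting style of \Cref{lem:counts}.

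First, I will record the sizes. Put $t_1=|V_1|$ and $t_2=|V_2|$. Since $V_1\cap V_2=\{r\}$, we have $|V_1\cup V_2|=t_1+t_2-1$. The edge sets $E(T^{(1)})$ and $E(T^{(2)})$ are disjoint as edge occurrences. Indeed, an edge of $T^{(1)}$ has both endpoints in $V_1$, and an edge of $T^{(2)}$ has both endpoints in $V_2$. A common underlying pair would therefore have both endpoints in $V_1\cap V_2=\{r\}$, which is impossible for a nonloop edge. In the labelled setting the occurrences are distinct objects anyway. Hence the union has $(t_1-1)+(t_2-1)=(t_1+t_2-1)-1$ edges, exactly one fewer than its number of vertices.

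Second, I will show connectivity. Take any $u,v\in V_1\cup V_2$. If both lie in the same $V_j$, then $T^{(j)}$ contains a path between them. Otherwise, say $u\in V_1$ and $v\in V_2$. Then $T^{(1)}$ contains a $u$--$r$ path and $T^{(2)}$ contains an $r$--$v$ path, and concatenating them gives a $u$--$v$ walk. Hence the union is connected.

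A connected multigraph on $N$ vertices with $N-1$ edge occurrences is a tree in the graphic-matroid sense, so it has no cycle and in particular no parallel pair. This completes the proof. If one prefers a direct acyclicity argument instead, take a cycle in the union. It cannot lie entirely in one $T^{(j)}$. So it must pass from $V_1\setminus\{r\}$ to $V_2\setminus\{r\}$, and since no edge joins these two sets, it must cross through $r$ at least twice. A cycle visits $r$ only once, which gives a contradiction. I do not expect any real obstacle here. The only point needing care is the disjointness of the edge sets in the multigraph setting, and the endpoint argument above settles it.
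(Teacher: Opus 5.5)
Your proposal is correct and follows the paper's own proof: connectivity through the shared vertex $r$, the edge count $(|V_1|-1)+(|V_2|-1)=|V_1\cup V_2|-1$, and the fact that a connected graph with that many edges is a tree, with the remark that any cycle would have to lie in one side. Your explicit check that the two edge sets are disjoint as occurrences is a small addition that the paper leaves implicit.
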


\begin{proof}
The union is connected because both trees contain $r$. It has $(|V_1|-1)+(|V_2|-1)=|V_1\cup V_2|-1$ edges. A connected graph on $|V_1\cup V_2|$ vertices with this many edges is a tree. Equivalently, a cycle cannot pass from one side to the other and return because the two sides meet only at $r$; any cycle would lie entirely in one input tree.
\end{proof}

\begin{proof}[Proof of \Cref{thm:sum-closure}]
For $a\in\{1,2\}$, let $G^{(a)}=\biguplus_{e\in E(H^{(a)})}F^{(a)}_e$, let $\TT^{(a)}=(T^{(a)}_0,\ldots,T^{(a)}_{k-1})$, and let $\ell_a$ be a layer-contained certificate. Take the disjoint union of the two tree assignments. For each $i$, define $T_i=T_i^{(1)}\cup T_i^{(2)}$. By \Cref{lem:tree-sum}, $T_i$ is a spanning tree of $V_1\cup V_2$. The trees $T_i$ are pairwise edge-disjoint and cover the entire assigned graph because these properties hold separately on each side.

Define $\ell(e)=\ell_a(e)$ for $e\in E(H^{(a)})$. Every assigned tree $F_e^{(a)}$ remains contained in $T_{\ell_a(e)}^{(a)}\subseteq T_{\ell(e)}$. Hence the glued data form a layer-contained certificate, and \Cref{thm:layer-positive} gives all stated conclusions.
\end{proof}

The argument iterates for any finite family of certified blocks that admits an ordering $H^{(1)},\ldots,H^{(m)}$ such that, for every $j\ge2$, the vertex set of $H^{(j)}$ meets the union of the preceding vertex sets in exactly one vertex and all hyperedge-occurrence sets are pairwise disjoint. In particular, iterated one-vertex sums of the blocks in \Cref{prop:critical-full-bundle,prop:saturated-certified} and \Cref{ex:nongraphic} produce infinite families with hyperedges of unbounded rank.

\begin{figure}[ht]
\centering
\begin{tikzpicture}[scale=0.95,every node/.style={circle,draw,inner sep=1.2pt},every path/.style={line width=0.9pt}]
\node (r) at (0,0) {$r$};
\node (a1) at (-2,1.2) {$a_1$};
\node (a2) at (-2,-1.2) {$a_2$};
\node (a3) at (-3,0) {$a_3$};
\node (b1) at (2,1.2) {$b_1$};
\node (b2) at (2,-1.2) {$b_2$};
\node (b3) at (3,0) {$b_3$};
\draw (r)--(a1); \draw (r)--(a2); \draw (r)--(a3);
\draw[dashed] (r)--(b1); \draw[dashed] (r)--(b2); \draw[dashed] (r)--(b3);
\draw[decorate,decoration={brace,amplitude=5pt},thick] (-3.2,-1.55)--(-0.15,-1.55) node[midway,below=6pt,draw=none,rectangle] {left certificate};
\draw[decorate,decoration={brace,amplitude=5pt},thick] (0.15,-1.55)--(3.2,-1.55) node[midway,below=6pt,draw=none,rectangle] {right certificate};
\end{tikzpicture}
\caption{One-vertex gluing at $r$. Corresponding tree layers are united, and each resulting layer is again a spanning tree.}
\label{fig:sum}
\end{figure}

\section{Signature fibers, partition slack, and local obstructions}\label{sec:slack}

This section separates local signature arithmetic from the global capacity constraints used in \Cref{thm:layer-positive}, and records the exact relation between hypergraph and graph partition slack.

\subsection{The labelled matroid fiber}

Let $G=\biguplus_{e\in E(H)}F_e$ be a connected tree assignment of $H$. The bases of its graphic matroid $M(G)$ are precisely the spanning trees of $G$. Hence an ordered $k$-tree decomposition is an ordered basis partition $E(G)=B_0\dot\cup\cdots\dot\cup B_{k-1}$ with $B_i\in\mathcal B(M(G))$. Evaluated on the indicator vectors of these ordered basis partitions, the signature is the restriction of the linear map
\begin{equation}\label{eq:linear-sig}
 \Phi:(\ZZ^{E(G)})^k\longrightarrow\ZZ^{E(H)},\qquad
 \Phi(z_0,\ldots,z_{k-1})_e
 =\sum_{i=0}^{k-1}i\sum_{f\in E(F_e)}z_i(f).
\end{equation}
Distinguishability asks for a singleton fiber of $\Phi$ on the finite set of ordered basis partitions.

For a fixed label $e$, put $n_e=|E(F_e)|$ and $a_i(e)=|B_i\cap E(F_e)|$. Then $a_i(e)\in\ZZ_{\ge0}$, $\sum_{i=0}^{k-1}a_i(e)=n_e$, and the corresponding signature coordinate is $\sum_{i=0}^{k-1}i\,a_i(e)$. The extremal coordinates are rigid: $\sig(e)=0$ if and only if $(a_0,\ldots,a_{k-1})=(n_e,0,\ldots,0)$, and $\sig(e)=(k-1)n_e$ if and only if $(a_0,\ldots,a_{k-1})=(0,\ldots,0,n_e)$.

Interior averages are not rigid at the level of a single label.

\begin{proposition}[Local ambiguity of an interior signature]\label{prop:local-ambiguity}
Fix $k\ge3$, $1\le c\le k-2$, and $n\ge2$. There are distinct vectors $a,b\in\ZZ_{\ge0}^k$ such that $\sum_{i=0}^{k-1}a_i=\sum_{i=0}^{k-1}b_i=n$ and $\sum_{i=0}^{k-1}i\,a_i=\sum_{i=0}^{k-1}i\,b_i=cn$. One may take $b_c=n$, $a_{c-1}=a_{c+1}=1$, and $a_c=n-2$, with every unspecified coordinate of both $a$ and $b$ equal to zero.
\end{proposition}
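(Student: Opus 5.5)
The plan is to verify directly that the explicit vectors named in the statement have the required properties. Take $b$ to be the vector with $b_c=n$ and all other coordinates zero. Take $a$ to be the vector with $a_{c-1}=1$, $a_c=n-2$, $a_{c+1}=1$, and all other coordinates zero.

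First, both vectors are well defined. The hypothesis $1\le c\le k-2$ gives $0\le c-1$ and $c+1\le k-1$, so the indices $c-1,c,c+1$ are distinct and lie in $\{0,\ldots,k-1\}$. The hypothesis $n\ge2$ gives $n-2\ge0$, so every coordinate of $a$ and $b$ is a nonnegative integer. This is exactly where $k\ge3$ is used: it is needed for an interior index $c$ to exist.

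Second, the two linear constraints follow from short computations. For the coordinate sums, $\sum_i b_i=n$ and $\sum_i a_i=1+(n-2)+1=n$. For the weighted sums, $\sum_i i\,b_i=cn$ and
\[
\sum_i i\,a_i=(c-1)+c(n-2)+(c+1)=cn.
\]

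Third, the vectors are distinct because $a_{c-1}=1\neq0=b_{c-1}$. Nothing else is required.

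There is no real obstacle. The only point needing care is the range check on the indices and on $n-2$, which is where all three hypotheses $k\ge3$, $1\le c\le k-2$, and $n\ge2$ enter. One could also note that this is the local ambiguity described in \Cref{rem:no-interior-restriction}. A single label in layer $c$ can exchange two edges symmetrically between layers $c-1$ and $c+1$ without changing its signature coordinate. This is consistent with \Cref{thm:layer-positive}, because the global capacity argument there removes the ambiguity.
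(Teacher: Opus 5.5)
Your proof is correct and follows the paper's argument: you check both coordinate sums, compute $(c-1)+c(n-2)+(c+1)=cn$, and separate $a$ from $b$ at coordinate $c-1$. Your explicit check that $c-1,c+1\in\{0,\ldots,k-1\}$ and $n-2\ge0$ is a harmless addition that the paper leaves implicit.
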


\begin{proof}
Both vectors have coordinate sum $n$. The weighted sum of $a$ is $(c-1)+c(n-2)+(c+1)=cn$, equal to the weighted sum of $b$. They are distinct because $a_{c-1}=1$ while $b_{c-1}=0$.
\end{proof}

\begin{remark}[Local ambiguity is not global nonuniqueness]\label{rem:local-not-global}
\Cref{prop:local-ambiguity} shows only that one signature coordinate does not determine the distribution of one label in isolation. It does not construct a second ordered basis partition. For example, take $k=3$, $t=3$, and three labelled full hyperedges, one assigned wholly to each layer as in \Cref{prop:critical-full-bundle}. The middle label has two edges and target signature $2$; the alternative count vector $(1,0,1)$ has the same weighted sum as $(0,2,0)$. It nevertheless cannot occur in a full decomposition with the same complete signature because the labels prescribed to layers $0$ and $2$ already fill those layers. This is the capacity mechanism formalized by \eqref{eq:induction-signature}.
\end{remark}

\subsection{Exact partition slack}

For a tree assignment $F=(F_e)_{e\in E(H)}$, define the assignment slack by $\Lambda_F(\PP)\defeq\sum_{e\in E(H)}(\cross_{F_e}(\PP)-p_{\PP}(e)+1)$. It is nonnegative by \Cref{lem:tree-cross-lower}. If the assigned graph $G=\biguplus_{e\in E(H)}F_e$ has a full ordered $k$-tree decomposition, define its graphical partition surplus by $B_G(\PP)\defeq\cross_G(\PP)-k(|\PP|-1)$. Since every spanning tree crosses a partition with $|\PP|$ blocks in at least $|\PP|-1$ edges, one also has $B_G(\PP)=\sum_{i=0}^{k-1}\cross_{T_i}(\PP)-k(|\PP|-1)\ge0$. The first expression shows that $B_G(\PP)$ depends only on $G$, not on the chosen full decomposition.

\begin{proposition}[Exact slack identity]\label{prop:slack-identity}
For every partition $\PP$ of $V$,
\begin{equation}\label{eq:slack-identity}
 w_H(\PP)-k(|\PP|-1)=B_G(\PP)-\Lambda_F(\PP).
\end{equation}
Consequently, for a fixed assignment whose graph admits a full $k$-tree decomposition, $H$ is $k$-WPC if and only if $\Lambda_F(\PP)\le B_G(\PP)$ for every partition $\PP$.
\end{proposition}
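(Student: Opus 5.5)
The plan is to prove \eqref{eq:slack-identity} by expanding both sides using the definitions and the additivity of crossing numbers over label classes. Then the equivalence follows from the nonnegativity statements recorded just before the proposition.

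First, I will expand $\Lambda_F(\PP)$ directly:
\[
\Lambda_F(\PP)=\sum_{e\in E(H)}\cross_{F_e}(\PP)-\sum_{e\in E(H)}(p_{\PP}(e)-1).
\]
The first sum equals $\cross_G(\PP)$, because $G$ is the disjoint union of the $F_e$ as labelled edge occurrences. A graph edge crosses $\PP$ exactly when it does so inside its own label class, which is the same equality already used in \eqref{eq:graph-dominates-hypergraph}. The second sum is $w_H(\PP)$ by definition. This gives $\Lambda_F(\PP)=\cross_G(\PP)-w_H(\PP)$.

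Second, I will substitute the definition $B_G(\PP)=\cross_G(\PP)-k(|\PP|-1)$. Subtracting then yields
\[
B_G(\PP)-\Lambda_F(\PP)=w_H(\PP)-k(|\PP|-1),
\]
which is \eqref{eq:slack-identity}. The hypothesis that $G$ has a full ordered $k$-tree decomposition is needed only to make $B_G$ meaningful as defined, and to know $B_G\ge0$. The identity itself is purely algebraic and holds for every partition, including the trivial one, where both sides vanish.

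Third, for the consequence, I will fix the assignment $F$. By definition, $k$-WPC means that $w_H(\PP)-k(|\PP|-1)\ge0$ for every $\PP$. By the identity, this is equivalent to $B_G(\PP)-\Lambda_F(\PP)\ge0$, that is, $\Lambda_F(\PP)\le B_G(\PP)$ for all $\PP$.

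There is no real obstacle here. The only point deserving care is the additivity $\cross_G(\PP)=\sum_e\cross_{F_e}(\PP)$ for a labelled multigraph in which parallel edges from distinct labels are counted separately. This holds because crossing is a property of each edge occurrence individually, so the count is additive over the edge partition $E(G)=\dot\bigcup_eE(F_e)$.
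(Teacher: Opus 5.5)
Your proposal is correct and matches the paper's proof: both derive $w_H(\PP)=\sum_e\cross_{F_e}(\PP)-\Lambda_F(\PP)=\cross_G(\PP)-\Lambda_F(\PP)$ from additivity of crossings over the label classes, substitute the definition of $B_G$, and read off the equivalence directly from the definition of $k$-WPC. Your opening remark that the equivalence rests on the nonnegativity of $B_G$ and $\Lambda_F$ is unnecessary; as your third step shows, only the identity itself is used.
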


\begin{proof}
By the definition of $\Lambda_F$, $w_H(\PP)=\sum_e(p_{\PP}(e)-1)=\sum_e\cross_{F_e}(\PP)-\Lambda_F(\PP)=\cross_G(\PP)-\Lambda_F(\PP)$. Subtracting $k(|\PP|-1)$ and using the definition of $B_G$ gives \eqref{eq:slack-identity}. The stated equivalence is exactly the definition of $k$-WPC\@.
\end{proof}

For a star assignment, the local slack has a closed form.

\begin{lemma}[Star crossing formula]\label{lem:star-crossing}
Let $S_e(c)$ be the star on $e$ centered at $c$, and let $P_c$ be the block of $\PP$ containing $c$. Then
\begin{equation}\label{eq:star-crossing}
 \cross_{S_e(c)}(\PP)
 =p_{\PP}(e)-1+
 \sum_{P\in\PP\setminus\{P_c\}}\max\{0,|P\cap e|-1\}.
\end{equation}
\end{lemma}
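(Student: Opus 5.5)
The plan is to count crossing edges of the star block by block. Every edge of $S_e(c)$ has the form $\{c,u\}$ with $u\in e\setminus\{c\}$. It crosses $\PP$ exactly when $u\notin P_c$. Hence
\[
\cross_{S_e(c)}(\PP)=|e\setminus P_c|=\sum_{P\in\PP\setminus\{P_c\}}|P\cap e|,
\]
because the blocks other than $P_c$ partition $V\setminus P_c$. No edge can cross inside $P_c$, since $c\in P_c$ and every edge is incident with $c$.

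Next I would rewrite each summand. For an integer $m\ge0$, one has $m=[m\ge1]+\max\{0,m-1\}$. Applying this with $m=|P\cap e|$ gives
\[
\sum_{P\ne P_c}|P\cap e|=\#\{P\ne P_c:P\cap e\ne\varnothing\}+\sum_{P\ne P_c}\max\{0,|P\cap e|-1\}.
\]

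The count of blocks other than $P_c$ that meet $e$ equals $p_{\PP}(e)-1$. This holds because $P_c$ itself meets $e$, as $c\in e\cap P_c$. Substituting this count yields \eqref{eq:star-crossing}. There is no genuine obstacle here; the only point requiring care is that $c\in e$ is what guarantees $P_c$ is among the $p_{\PP}(e)$ blocks met by $e$.
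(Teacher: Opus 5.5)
Your proposal is correct and follows essentially the same argument as the paper: the edges $cu$ with $u\notin P_c$ give $\sum_{P\ne P_c}|P\cap e|$, each nonzero summand is split as $1+(|P\cap e|-1)$, and exactly $p_{\PP}(e)-1$ blocks other than $P_c$ meet $e$. You also state explicitly that $c\in e$ places $P_c$ among the blocks met by $e$, a point the paper leaves implicit.
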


\begin{proof}
An edge $cu$ crosses $\PP$ precisely when $u\notin P_c$, so $\cross_{S_e(c)}(\PP)=\sum_{P\ne P_c}|P\cap e|$. Exactly $p_{\PP}(e)-1$ blocks distinct from $P_c$ meet $e$. Splitting each nonzero summand as $|P\cap e|=1+(|P\cap e|-1)$ gives \eqref{eq:star-crossing}.
\end{proof}

\begin{definition}[Partition-tight realization]\label{def:partition-tight}
A tree assignment is \emph{partition-tight} if $\sum_e\cross_{F_e}(\PP)=w_H(\PP)$ for every partition $\PP$. Equivalently, every individual inequality in \Cref{lem:tree-cross-lower} is an equality for every hyperedge occurrence and every partition.
\end{definition}

The equivalence follows because all local slacks are nonnegative. Partition-tightness is not needed to pass from hypergraph weak partition connectivity to graph tree packing: the inequality in \eqref{eq:graph-dominates-hypergraph} has the correct direction. It would be relevant in the reverse direction, if one attempted to infer the hypergraph inequalities solely from graph crossings of a fixed assignment. Without control of $\Lambda_F$, graph crossings may strictly exceed hypergraph weak excess.

If $|e|\ge3$, the assigned tree $F_e$ cannot attain equality in \Cref{lem:tree-cross-lower} for every partition. Indeed, $F_e$ has a vertex $c$ of degree at least two. For the bipartition $\{\{c\},V\setminus\{c\}\}$, one has $p_{\PP}(e)-1=1$, whereas $F_e$ has at least two crossing edges. The layer-contained proof avoids this reverse-transfer obstruction by applying quotient rank separately in each tree layer.

If every assigned tree is a star centered at $c(e)$, then \Cref{lem:star-crossing} gives
\begin{equation}\label{eq:star-global-lambda}
 \Lambda_F(\PP)=
 \sum_{e\in E(H)}\sum_{P\in\PP\setminus\{P_{c(e)}\}}
 \max\{0,|P\cap e|-1\}.
\end{equation}
Together with \Cref{prop:slack-identity}, this shows that weak partition connectivity of a fixed star assignment whose graph has a full decomposition is equivalent to the finite family of inequalities
\begin{equation}\label{eq:finite-system}
 \sum_{e\in E(H)}\sum_{P\in\PP\setminus\{P_{c(e)}\}}\max\{0,|P\cap e|-1\}
 \le\cross_G(\PP)-k(|\PP|-1)
 \qquad(\PP\in\Pi(V)),
\end{equation}
where $\Pi(V)$ is the set of all set partitions of $V$. For a layer-contained star assignment, these inequalities follow automatically from \Cref{thm:layer-positive}; the quotient-rank argument proves them without enumerating partitions.

For the discrete partition $\PP_0$, every assigned tree edge crosses, and therefore $\Lambda_F(\PP_0)=\sum_e(|E(F_e)|-(|e|-1))=0$. Hence $w_H(\PP_0)-k(|\PP_0|-1)=\rho(H)-k(t-1)$. The discrete partition has no assignment slack with which to absorb overfullness; this is the rigidity behind \Cref{thm:excess-obstruction}.

\section{Corrected formulations, pruning, and conclusions}\label{sec:formulations}

The edge-count obstruction leads to three distinct universal statements. Their quantifiers and logical relation must be kept separate.

\begin{enumerate}[label=\textup{(\Alph*)},leftmargin=2.4em]
\item \textbf{Literal full-assignment statement.} Every $k$-WPC hypergraph has a $k$-distinguishable full tree assignment. This is \Cref{conj:literal} and is false by \Cref{thm:full-edge-counterexamples}.

\item \textbf{Critical full-assignment statement.} Every $k$-critical, $k$-WPC hypergraph has a $k$-distinguishable full tree assignment. This is \Cref{conj:critical}. It is compatible with edge counting and is sufficient for the intersection-matrix implication in \Cref{thm:critical-implies-matrix}. Under the canonical support representation, its input conditions coincide with the intersection weight conditions by \Cref{prop:critical-weight-equivalence}; its distinguishability conclusion is a stronger structural certificate than matrix full column rank alone.

\item \textbf{Pruned-assignment statement.} Every $k$-WPC hypergraph $H$ has a tree assignment $G$ and a labelled spanning subgraph $G'\subseteq G$ with $|E(G')|=k(t-1)$ such that $G'$ has a signature-unique ordered $k$-tree decomposition, where, for each label $e$, the signature coordinate is computed using the retained class $E(F_e)\cap E(G')$.
\end{enumerate}

Let \textup{(C$_>$)} denote statement \textup{(C)} restricted to overfull hypergraphs, namely those satisfying $\rho(H)>k(t-1)$.

\begin{proposition}[Exact logical decomposition of pruning]\label{prop:pruning-logic}
As universal statements over $k$-WPC hypergraphs, \textup{(C)} holds if and only if both \textup{(B)} and \textup{(C$_>$)} hold. In particular, the unrestricted pruning statement implies the critical full-assignment conjecture; it is not logically independent of it.
\end{proposition}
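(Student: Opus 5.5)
The statement is a logical equivalence among universal statements, and I will prove it by splitting the class of $k$-WPC hypergraphs according to the dichotomy of \Cref{lem:discrete}. Every $k$-WPC $H$ satisfies $\rho(H)\ge k(t-1)$, so it is either critical or overfull. The plan is to show that, \emph{restricted to critical hypergraphs}, statement (C) is equivalent to statement (B). Once that is established, (C) over all $k$-WPC hypergraphs is literally the conjunction of (C) on critical instances and (C) on overfull instances. The latter is (C$_>$) by definition, and the former is (B). The "in particular" clause then follows immediately, since (C) implies (B).

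\textbf{Critical case.} Let $H$ be $k$-critical and $k$-WPC. For (B)$\Rightarrow$(C) on $H$, take the $k$-distinguishable full assignment $G$ provided by (B) and set $G'=G$. By \Cref{lem:counts}, $|E(G')|=\rho(H)=k(t-1)$. The retained classes are $E(F_e)\cap E(G')=E(F_e)$, so the pruned signature coincides with $\sig_{\TT}$. Signature uniqueness therefore transfers verbatim. For (C)$\Rightarrow$(B) on $H$, let $G$ and $G'$ be as in (C). The subgraph $G'\subseteq G$ has $k(t-1)=\rho(H)=|E(G)|$ edges, again by \Cref{lem:counts}, and since edges are counted as occurrences this forces $G'=G$. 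Then the retained classes are the full label classes, and the signature-unique decomposition of $G'$ is a signature-unique decomposition of the full assignment $G$. Hence $G$ is $k$-distinguishable.

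\textbf{Assembly.} We have (C) $\iff$ [(C) on critical] $\wedge$ [(C) on overfull] $\iff$ (B) $\wedge$ (C$_>$). The first equivalence uses that no $k$-WPC hypergraph is underfull.

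The only step needing care, and so the main obstacle, is confirming that the pruned-signature convention agrees exactly with $\sig$ when nothing is pruned. In particular, every edge of $G$ is retained, so no label class becomes empty or partial in a way that changes coordinates. I also need to check that (B)'s quantifier ("has a $k$-distinguishable full tree assignment") matches (C)'s existential choice of $G$ and $G'$. Both checks are definitional, so I expect no substantive difficulty.
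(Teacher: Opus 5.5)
Your proposal is correct and follows essentially the same route as the paper. You use \Cref{lem:discrete} to split $k$-WPC hypergraphs into critical and overfull cases; in the critical case \Cref{lem:counts} forces $G'=G$, so \textup{(C)} and \textup{(B)} coincide there, and the overfull case is \textup{(C$_>$)} by definition. Your explicit check that the retained-class signature reduces to $\sig_{\TT}$ when nothing is pruned makes precise a step the paper leaves implicit; it is not a different argument.
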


\begin{proof}
Assume \textup{(C)} and let $H$ be critical. Every full tree assignment $G$ of $H$ has $|E(G)|=\rho(H)=k(t-1)$ by \Cref{lem:counts}. The subgraph $G'$ supplied by \textup{(C)} has the same number of edge occurrences as $G$ and is obtained by deletion, so $G'=G$. Hence \textup{(B)} holds. The restriction \textup{(C$_>$)} follows immediately from \textup{(C)}.

Conversely, assume \textup{(B)} and \textup{(C$_>$)}, and let $H$ be $k$-WPC\@. By \Cref{lem:discrete}, $\rho(H)\ge k(t-1)$. If equality holds, apply \textup{(B)} and take $G'=G$. If the inequality is strict, apply \textup{(C$_>$)}. These are the only two cases, so \textup{(C)} holds.
\end{proof}

The overfull component cannot be solved by a universal rule that merely deletes whole hyperedge occurrences until the target excess is reached; whole-hyperedge contributions can be arithmetically too coarse.

\begin{example}[Whole-hyperedge pruning can miss the target]\label{ex:whole-edge-pruning}
Let $V=\{1,2,3,4\}$, let $k=1$, and let $H$ have two hyperedges $e_1=\{1,2,3\}$ and $e_2=\{1,2,4\}$. Then $\rho(H)=4>3=k(|V|-1)$, but no subhypergraph has excess $3$: the possible excesses are $0$, $2$, and $4$.

The hypergraph is nevertheless $1$-WPC\@. Let $\PP$ have $s$ blocks. Since $e_1\cup e_2=V$, every block is met by at least one of the two hyperedges. The block containing vertex $1$ is met by both. Counting block--hyperedge incidences gives $p_{\PP}(e_1)+p_{\PP}(e_2)\ge s+1$, and hence $w_H(\PP)=p_{\PP}(e_1)+p_{\PP}(e_2)-2\ge s-1$.

Thus deletion of whole hyperedge occurrences alone cannot be a valid universal exact-pruning rule. Any successful exact theorem must use a finer operation, such as deletion of assigned graph edges or determinant rows, or some other mechanism that is not constrained to change the excess by an entire hyperedge contribution.
\end{example}

The logarithmic relaxation recorded in~\cite[Theorem~6.4]{GLSTW24}, based on results of Cheriyan--Salavatipour and Calinescu--Chekuri--Vondr\'ak~\cite{CS07,CCV09}, yields a distinguishable subgraph under stronger weak partition connectivity. It motivates \textup{(C$_>$)} but does not settle the exact $k$-WPC formulation.

The preceding results give the following corrected picture. The literal conjecture fails because weak partition connectivity supplies a lower bound on excess, while a full decomposition imposes equality. The family $H_{t,k,q}$ gives counterexamples for every $t\ge2$, $k\ge1$, and $q\ge1$. In the critical regime, every tree assignment already has a full $k$-tree decomposition by \Cref{thm:any-assignment-packing}; the remaining assertion is whether some assignment has a decomposition with singleton signature fiber. Layer-contained certificates establish this assertion for an explicit infinite class, with no star requirement and no restriction on the ranks of interior labels. Their weak partition inequalities follow from quotient rank, their signatures are isolated by a layer-by-layer capacity induction, and the construction is stable under iterated one-vertex sums.

For the Reed--Solomon intersection-matrix route, the critical equality is $\wt(I_{[t]})=k(t-1)$. By \Cref{prop:critical-weight-equivalence}, the corresponding proper-subset weight inequalities are exactly the canonical form of critical weak partition connectivity. By \Cref{prop:square}, the total equality is precisely the condition under which a full tree assignment supplies the $k(t-1)$ selected bottom rows required for the square minor. The decomposition-indexed generalized-Laplace expansion in \Cref{lem:det-expansion} then identifies the signature of an ordered decomposition with the exponent vector of its determinant monomial, while its coefficient formula records the possible cancellation among decompositions in the same signature fiber.

The remaining overfull problem is the separate component \textup{(C$_>$)}: if $H$ is $k$-WPC and $\rho(H)>k(t-1)$, must some tree assignment of $H$ contain a labelled spanning subgraph with exactly $k(t-1)$ edges and a signature-unique ordered $k$-tree decomposition? By \Cref{prop:pruning-logic}, a universal pruned-assignment theorem consists exactly of the critical conjecture together with this overfull component. No claim of logical independence between them is justified.

\section*{Declaration of Generative AI and AI-Assisted Technologies in the Writing Process}
During the preparation of this manuscript, the authors used a DeepSeek-based specialized agent to generate an initial proof draft. The authors subsequently verified and revised all arguments and accept full responsibility for the contents of the manuscript.

\end{document}